\newtheorem{defn}{Definition}
\newtheorem{thm}{Theorem}
\newtheorem{lem}{Lemma}
\newtheorem*{rem}{Remark}
\newtheorem{prop}{Proposition}[section]
\begin{document}

\title{The growth rates of ideal Coxeter polyhedra \\ in hyperbolic 3-space}

\author{Jun Nonaka}





\date{}

\maketitle
\begin{abstract}
In \cite{ref:kp}, Kellerhals and Perren conjectured that the growth rates of the 
reflection groups given by hyperbolic Coxeter polyhedra are always Perron numbers. 
We prove that this conjecture is always true for the case of ideal Coxeter polyhedra in $\mathbb{H}^3$. 
We also find out the ideal Coxeter polyhedron in $\mathbb{H}^3$ with the smallest growth rate. 
Finally, we show that there are correlations between 
the volumes and the growth rates of ideal Coxeter polyhedra in $\mathbb{H}^3$ in many cases. 
\end{abstract}

\textbf{Key words:} Hyperbolic Coxeter polyhedron, growth rate, Perron number. \\

\textbf{MSC (2010)} 20F55, 51F15, 52B10\\

\section{Introduction} 
A convex polyhedron is called a {\it Coxeter polyhedron} if its dihedral angles are submultiples of $\pi$. 
In particular, if all of its dihedral angles are $\pi /2$, then we call this polyhedron {\it right-angled}. \\
Any Coxeter polyhedron is a fundamental domain of the discrete group $\Gamma$ generated by the set $S$ 
consisting of the reflections with respect to its facets. 
We call $(\Gamma ,S)$ an $n$-$dimensional$ $hyperbolic$ $Coxeter$ $group$ 
if $(\Gamma ,S)$ is obtained by a Coxeter polyhedron of finite volume in hyperbolic $n$-space $\mathbb{H}^n$. 
In this situation, we define the $word$ $length$ $l_{S}(x)$ $of$ $x\in \Gamma$ $with$ $respect$ $to$ $S$ by 
the smallest integer $i\geq 0$ for which there exist $s_1$, $s_2$, $\cdots $, $s_i\in S$ 
such that $x=s_1s_2\cdots s_i$. 
The growth function $f_S(t)$ of $(\Gamma ,S)$ is the formal power series $\sum_{j=0}^{\infty }a_jt^j$ where 
$a_j$ is the number of elements $g\in \Gamma $ satisfying $l_S(g)=j$. 
We also define the $growth$ $rate$ of $(\Gamma ,S)$ by $\tau :=\limsup_{j\to \infty }\sqrt[j]{a_j}$. 

There are some results about growth rates of hyperbolic Coxeter groups. 
For example, Cannon-Wagreich and Parry showed that 
the growth rates of two and three-dimensional cocompact hyperbolic Coxeter groups are Salem numbers 
\cite{ref:cw, ref:pa}, where a real algebraic integer $\tau >1$ is called a {\it Salem number} if $\tau ^{-1}$ is 
an algebraic conjugate of $\tau $ and the absolute value of all algebraic conjugates of $\tau $ other than 
$\tau ^{-1}$ is 1. 
Kellerhals and Perren proved that the growth rates of four-dimensional cocompact hyperbolic Coxeter groups 
with at most 6 generators are Perron numbers \cite{ref:kp}. 
A real algebraic integer $\tau '>1$ is called {\it Perron number} if its conjugates have moduli less than the 
modulus of $\tau '$. 

In non-compact case, Floyd proved that the growth rates of two-dimensional non-cocompact hyperbolic 
Coxeter groups are {\it Pisot-Vijayaraghavan numbers}, where a real algebraic integer $\tau ''$ is called 
a Pisot-Vijayaraghavan number if the absolute value of all algebraic conjugates of $\tau ''$ other than $\tau ''$ 
is less than or equal to 1. 
Note that both a Salem number and a Pisot-Vijayaraghan number are also Perron numbers. 
Komori and Umemoto proved that the growth rates of three-dimensional hyperbolic Coxeter groups 
with four or five generators are Perron numbers \cite{ref:ku, ref:u}. 

Regarding these results, Kellerhals and Perren conjectured that 
the growth rates of hyperbolic Coxeter groups are always Perron numbers. 
In this paper, we prove that the growth rates of three-dimensional hyperbolic Coxeter groups 
given by ideal Coxeter polyhedra with finite volume are Perron numbers. 
The definition of an ideal Coxeter polyhedron is as follows. 
\begin{defn}
{\rm A hyperbolic polyhedron is} ideal 
{\rm if all of its vertices are ideal points of $\mathbb{H}^n$, i.e. all of them belongs to 
$\partial \overline{\mathbb{H}^n}$. }
\end{defn}
Note that an ideal Coxeter polyhedron is non-compact. 

In compact case, 
Kellerhals and Kolpakov found the minimal growth rate of Coxeter polyhedra in $\mathbb{H}^3$ \cite{ref:kk}. 
In the same paper, they also consider the relation between its volumes and its growth rates of 
Coxeter polyhedra. 
In Section 5, we detect the ideal Coxeter polyhedron in $\mathbb{H}^3$ with the minimal growth rate. 
Moreover, in Section 6, we see the relation between the volumes and the growth rates of 
some ideal Coxeter polyhedra. 

\section{Ideal Coxeter polyhedra} 
In this section, we focus on some characteristics of ideal Coxeter polyhedra of finite volume 
in $\mathbb{H}^3$. 

Felikson and Tumarkin proved that 
a simple ideal hyperbolic Coxeter polyhedron can exist only in $\mathbb{H}^n$ for $n< 8$ in \cite{ref:ft}. 
On the other hand, Kolpakov proved that 
there is no ideal right-angled polyhedron in $\mathbb{H}^n$ for $n\geq 7$ in \cite{ref:k}. 
In the same paper, he also proved that 24-cell is the ideal right-angled polyhedron 
which has the minimal facet number in $\mathbb{H}^4$. \\
In $\mathbb{H}^3$, it is well-known that an octahedron is the ideal right-angled polyhedron which has 
the minimal facet number among all ideal Coxeter polyhedra. 
We give a short proof in the end of this section. \\
From now on, we restrict our attention to ideal Coxeter polyhedra of finite volume in $\mathbb{H}^3$. 
When an ideal Coxeter polyhedron has finite volume, we call its vertex $cusp$. 
The following lemma plays very important role in the proof of our main theorems. 
\begin{lem}\label{lem-1}
Any cusp of a three-dimensional hyperbolic ideal Coxeter polyhedron satisfies one of the following conditions: \\
$(i)$ shared by four faces, and the dihedral angles of these faces are equal to $\frac{\pi}{2}$, \\
$(ii)$ shared by three faces, and the dihedral angles of these faces are equal to $\frac{\pi}{3}$, \\
$(iii)$ shared by three faces, and one of the dihedral angles of these faces is $\frac{\pi}{2}$ and the other two 
dihedral angles are $\frac{\pi}{4}$, \\
$(iv)$ shared by three faces, and the dihedral angles of these faces are $\frac{\pi}{2}$, $\frac{\pi}{3}$ 
and $\frac{\pi}{6}$. 
\end{lem}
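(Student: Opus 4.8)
The plan is to study the link of a cusp. Let $v$ be a cusp of a finite-volume ideal Coxeter polyhedron $P\subset\mathbb{H}^3$, and choose a horosphere $B$ centred at $v$ small enough to meet only those faces of $P$ that contain $v$. The set $L:=P\cap B$ inherits from $B$ (which is isometric to $\mathbb{E}^2$) the structure of a convex Euclidean polygon whose sides correspond bijectively to the faces of $P$ through $v$, and whose interior angle at the corner cut out by two such faces equals the dihedral angle of $P$ along their common edge. Since $P$ is a Coxeter polyhedron, each such dihedral angle is $\pi/k$ for some integer $k\ge 2$. Moreover, because $P$ has finite volume the cusp cross-section has finite area, which forces $L$ to be a compact convex $n$-gon; in particular the sum of its interior angles is exactly $(n-2)\pi$. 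So the lemma is reduced to classifying the compact convex Euclidean $n$-gons all of whose angles are submultiples of $\pi$ — that is, the fundamental domains of the rank-$3$ affine Coxeter groups.

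First I would bound $n$. Writing the interior angles of $L$ as $\pi/k_1,\dots,\pi/k_n$ with each $k_i\ge 2$, every angle is at most $\pi/2$, so $(n-2)\pi=\sum_{i=1}^n \pi/k_i\le n\pi/2$, whence $n\le 4$; and trivially $n\ge 3$ since a compact Euclidean polygon has at least three sides.

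Next I would treat the two cases. For $n=3$ the exponents satisfy $\tfrac1{k_1}+\tfrac1{k_2}+\tfrac1{k_3}=1$ with $2\le k_1\le k_2\le k_3$, whose only integer solutions are $(3,3,3)$, $(2,4,4)$, $(2,3,6)$; these give conditions $(ii)$, $(iii)$, $(iv)$ respectively, and in each case exactly three faces meet at $v$. For $n=4$ we have $\tfrac1{k_1}+\cdots+\tfrac1{k_4}=2$ with each term at most $\tfrac12$, so equality is forced everywhere and all four dihedral angles equal $\tfrac{\pi}{2}$; this is condition $(i)$, with four faces meeting at $v$. This exhausts all possibilities. (Equivalently, $L$ is the fundamental domain of a cocompact Euclidean reflection group, and the standard classification of affine Coxeter groups says the only such domains in the plane are the triangles of types $\widetilde A_2$, $\widetilde C_2$, $\widetilde G_2$ and the rectangle of type $\widetilde A_1\times\widetilde A_1$.)

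I do not expect a serious obstacle here: the only point that needs genuine care is the first paragraph, namely that the horospherical cross-section $L$ really is a \emph{compact} Euclidean polygon whose corner angles are precisely the dihedral angles of $P$ at $v$. The Euclidean structure and the identification of corner angles with dihedral angles are local computations at the ideal vertex, while compactness is exactly where the finite-volume hypothesis is used — an ideal vertex of infinite covolume would have an unbounded cross-section, for which the angle-sum identity $\sum\pi/k_i=(n-2)\pi$ need not hold. Once this reduction is in place, the remainder is the elementary arithmetic above.
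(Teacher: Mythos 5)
Your proof is correct and follows essentially the same route as the paper: both arguments cut the cusp with a Euclidean cross-section (your horosphere link is the paper's horizontal cross-section in the upper half-space model with the cusp at infinity), identify corner angles with dihedral angles $\pi/m_i$, invoke the Euclidean angle-sum to get $\sum 1/m_i = n-2$, and solve the same elementary Diophantine problem to obtain cases $(i)$--$(iv)$. The only cosmetic difference is that you bound $n\le 4$ via ``each angle $\le \pi/2$'' while the paper uses $k/m_1\ge k-2$, which is the same estimate.
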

\begin{proof}
Fix one cusp of an ideal Coxeter polyhedron. 
Consider the upper half-space model whose point at infinity is the cusp we fixed. 
We may assume that this cusp is shared by $k$ faces. 
The hyperplanes containing these faces are vertical Euclidean 
planes which intersects the boundary of the upper half-space orthogonally. 
In this model, the dihedral angles of these faces are equal to the dihedral angles of corresponding 
hyperplanes in the Euclidean sense. 
We denote their dihedral angles by $\frac{\pi}{m_1}$, $\frac{\pi}{m_2}$, $\cdots$, $\frac{\pi}{m_k}$. 
Then we obtain 
\begin{equation*}
\sum_{i=1}^k\left( \pi -\frac{\pi}{m_i}\right)=2\pi. 
\end{equation*}
Thus 
\begin{equation*}
\frac{1}{m_1}+\cdots +\frac{1}{m_k}=k-2.
\end{equation*}
We may assume that $2\leq m_1\leq m_2\leq \cdots \leq m_k$. Then, by using the above equality, 
we obtain  
\begin{equation*}
\frac{k}{m_1}\geq k-2.
\end{equation*}
Thus 
\begin{equation*}
k\leq \frac{2m_1}{m_1-1}=2+\frac{2}{m_1-1} \leq 2+\frac{2}{2-1}=4.
\end{equation*}
This means that $k$ is equal to $3$ or $4$. 
When $k=4$, $m_1=m_2=m_3=m_4=2$. This corresponds to Condition $(i)$. 
When $k=3$, $(m_1, m_2, m_3)=(3,3,3)$, $(2,4,4)$ or $(2,3,6)$. These correspond to Conditions 
$(ii)$, $(iii)$ and $(iv)$ respectively. 
\end{proof}
From now on, we express $the$ $cusp$ $of$ $type$ $(i)$ $($resp. $(ii)$, $(iii)$ and $(iv))$ 
if the cusp satisfies condition $(i)$ $($resp. $(ii)$, $(iii)$ and $(iv))$ in Lemma \ref{lem-1}. 
We denote by $c_8$ $($resp. $c_9$, $c_{10}$ and $c_{11})$ 
the number of cusps of type $(i)$ $($resp. $(ii)$, $(iii)$ and $(iv))$. 
We also denote the total number of faces, edges and cusps by $f$, $e$ and $c$ respectively. 
Any edge of an ideal Coxeter polyhedron in $\mathbb{H}^3$ is shared by exactly two faces. 
If the dihedral angle of these two faces is $\frac{\pi}{m}$, we call this edge $\frac{\pi}{m}$-$edge$. 
We denote by $e_m$ the number of $\frac{\pi}{m}$-edges of an ideal Coxeter polyhedron. 
Then we obtain the following combinatorial identities. 
\begin{equation}
c-e+f=2,
\end{equation} 
\begin{equation}
4c_8+3c_9+3c_{10}+3c_{11}=2e,
\end{equation}
\begin{equation}
2e_2=4c_8+c_{10}+c_{11},
\end{equation}
\begin{equation}
2e_3=3c_9+c_{11},
\end{equation}
\begin{equation}
e_4=c_{10},
\end{equation}
\begin{equation}
2e_6=c_{11}.
\end{equation}
The first identity is Euler's identity. 
The other identities are also obtained by seeing both the number of edges sharing each cusp 
and that our assumption under consideration is ideal. 
For example, the identity $(2)$ is obtained as follows. 
Any cusp of type $(i)$ is shared by four edges and the other cusps are shared by three edges. 
On the other hand, any edge has two cusps. Thus we obtain the identity $(2)$. 

Moreover, by Lemma \ref{lem-1}, we obtain the following trivial identities; 
\begin{equation}
c=c_8+c_9+c_{10}+c_{11},
\end{equation}
\begin{equation}
e=e_2+e_3+e_4+e_6. 
\end{equation}
By using identities $(1)$, $(2)$ and $(7)$, we obtain 
\begin{equation}
c_8=2f-c-4.
\end{equation}
By substituting $(9)$ in $(7)$, we get 
\begin{equation}
c_9+c_{10}+c_{11}=2c-2f+4. 
\end{equation}

If an ideal Coxeter polyhedron in $\mathbb{H}^3$ is right-angled, then the type of any cusp is $(i)$, 
i.e. $c=c_8$. 
Thus, by $(9)$, we obtain $c=f-2$ in this case. 
On the other hand, since any face has at least three cusps 
while the each cusp is shared by at most four faces, we also obtain $4c\geq 3f$. 
Hence, if the ideal Coxeter polyhedron in $\mathbb{H}^3$ is right-angled, then we obtain $f\geq 8$. 
This means that the polyhedron with the minimal facet number among all ideal right-angled polyhedra 
in $\mathbb{H}^3$ must be an octahedron. 

\section{Growth functions} 
In this section, we consider the growth functions associated to 
hyperbolic Coxeter polyhedra in $\mathbb{H}^3$. 
In order to calculate the growth rates, we use the following theorems. 
\begin{thm}[\cite{ref:sol}]\label{thm-1}
The growth function ${\cal F} _S(t)$ of an irreducible spherical Coxeter group $(\Gamma , S)$ can be written as 
${\cal F}_s(t)=\Pi _{i=1}^k[m_i+1]$ where $[n]:=1+t+\cdots +t^{n-1}$ and $\{ m_1, m_2, \cdots , m_k\}$ is the set of exponents $($as defined in Section 3.16 of \cite{ref:hu}$)$ of $(\Gamma , S)$. 
\end{thm}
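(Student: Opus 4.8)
Since $(\Gamma,S)$ is spherical, $\Gamma$ is finite and $\mathcal{F}_S(t)=\sum_{g\in\Gamma}t^{\ell_S(g)}$ is simply the Poincar\'e polynomial of $\Gamma$; the plan is to compute it by realizing $\Gamma$ as a finite reflection group and invoking invariant theory, and then to match the grading with the length function. Set $d_i:=m_i+1$ for the degrees.

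First I would realize $(\Gamma,S)$ as a finite real reflection group acting on $V=\mathbb{R}^{k}$ and pass to the polynomial ring $S(V^{*})\cong\mathbb{R}[x_1,\dots,x_k]$. By Chevalley's theorem the invariant ring $S(V^{*})^{\Gamma}$ is itself a polynomial algebra on $k$ homogeneous generators of degrees $d_1,\dots,d_k$, and $S(V^{*})$ is free over it; hence, as graded vector spaces, $S(V^{*})\cong S(V^{*})^{\Gamma}\otimes R_{\Gamma}$, where $R_{\Gamma}:=S(V^{*})/\bigl(S(V^{*})^{\Gamma}_{+}\bigr)$ is the coinvariant algebra. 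Comparing Hilbert series gives
\begin{equation*}
\frac{1}{(1-t)^{k}}=\Bigl(\prod_{i=1}^{k}\frac{1}{1-t^{d_i}}\Bigr)\cdot\operatorname{Hilb}(R_{\Gamma},t),
\end{equation*}
so $\operatorname{Hilb}(R_{\Gamma},t)=\prod_{i=1}^{k}\dfrac{1-t^{d_i}}{1-t}=\prod_{i=1}^{k}[m_i+1]$, which is the product appearing in the statement.

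The second, and harder, step is to identify $\operatorname{Hilb}(R_{\Gamma},t)$ with $\mathcal{F}_S(t)$. It is classical that $R_{\Gamma}$ affords the regular representation of $\Gamma$; what is actually needed here is the refined, \emph{graded} statement that $R_{\Gamma}$ has a homogeneous vector-space basis $\{b_w\}_{w\in\Gamma}$ with $\deg b_w=\ell_S(w)$. I would obtain this either from the Garsia--Stanton descent-monomial basis, whose degrees are read off from the descent set of $w$, or --- when $\Gamma$ is a Weyl group --- from the Schubert basis of $R_{\Gamma}\cong H^{*}(G/B)$, in which the class attached to $w$ lies in degree $\ell_S(w)$. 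Granting such a basis, $\mathcal{F}_S(t)=\sum_{w\in\Gamma}t^{\ell_S(w)}=\operatorname{Hilb}(R_{\Gamma},t)=\prod_{i=1}^{k}[m_i+1]$. I expect the construction and verification of this length-graded basis to be the main obstacle; the invariant-theoretic computation above is formal once Chevalley's theorem is granted.

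For the purposes of this paper, however, a much shorter route suffices. The irreducible spherical Coxeter groups relevant to Coxeter polyhedra in $\mathbb{H}^{3}$ have rank at most $3$ --- namely $A_1$, the dihedral groups $I_2(m)$, $A_3$, $B_3$ and $H_3$ --- and by Lemma \ref{lem-1} only $A_1$ and the dihedral groups $A_2$, $B_2$, $G_2$ occur for \emph{ideal} polyhedra. For each of these one lists the element lengths and checks the identity directly: for $I_2(m)$ the lengths are $0,1,1,2,2,\dots,m-1,m-1,m$, so $\mathcal{F}_S(t)=(1+t)(1+t+\dots+t^{m-1})=[2][m]=[m_1+1][m_2+1]$, while for $A_k$ one recovers the $t$-factorial $[2][3]\cdots[k+1]$.
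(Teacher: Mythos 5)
The paper gives no argument for this statement: Theorem \ref{thm-1} is quoted from Solomon \cite{ref:sol} (see also \cite{ref:hu}), so there is no internal proof to compare yours against. Your overall plan (Chevalley's theorem, the coinvariant algebra, and a length-graded basis) is the standard modern route, and your closing reduction is both correct and exactly what this paper needs: since an ideal polyhedron has no vertices in $\mathbb{H}^3$, the only finite parabolic subgroups entering the growth-function computation are $A_1$, $A_1\times A_1$ and the dihedral groups $I_2(m)$ with $m=3,4,6$, and your direct check for $I_2(m)$ (lengths $0,1,1,\dots,m-1,m-1,m$, giving $\mathcal{F}_S(t)=[2][m]$) is right.

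As a proof of the theorem as stated (all irreducible finite Coxeter groups), however, the step you yourself flag as the main obstacle does contain a genuine gap, and one of the two tools you invoke does not do what you claim. The Garsia--Stanton descent monomial attached to $w$ has degree equal to the major index of $w$ (the sum of its descent positions), not to $l_S(w)$; in type $A$ the identity of generating functions survives only because major index and length are equidistributed (MacMahon), so you do not obtain a basis with $\deg b_w=l_S(w)$ this way, and in other types descent monomials do not grade by length either. The Schubert-basis argument does give classes in degree $l_S(w)$, but only for crystallographic $\Gamma$, so $H_3$, $H_4$ and $I_2(m)$ for general $m$ are left uncovered. To close the gap you should either build the length-graded basis of the coinvariant algebra with divided-difference (Demazure/BGG) operators, which require only the reflection representation and hence work for every finite real reflection group, or bypass coinvariants and prove $\sum_{w\in\Gamma}t^{l_S(w)}=\prod_{i=1}^{k}[m_i+1]$ by Solomon's induction over parabolic subgroups, which is essentially the argument of \cite{ref:sol} and of Section 3.15 of \cite{ref:hu}. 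With one of these inserted your sketch becomes a complete proof; without it, only the rank $\le 2$ cases actually used in this paper are established.
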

\begin{thm}[\cite{ref:ste}]\label{thm-2}
Let $(\Gamma , S)$ be a Coxeter group. Let us denote the Coxeter subgroup of $(\Gamma , S)$ 
generated by the subset $T\subseteq S$ by $(\Gamma _T, T)$, and denote its growth function by 
${\cal F}_T(t)$. Set $F=\{ T\subseteq S \mid \Gamma _T\ is\ finite\}$. Then 
\begin{equation*}
\frac{1}{{\cal F}_S(t^{-1})}=\sum_{T\in F}\frac{(-1)^{|T|}}{{\cal F}_T(t)}.
\end{equation*}
\end{thm}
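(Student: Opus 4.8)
The plan is to obtain Theorem~\ref{thm-2} from an inclusion--exclusion over descent sets together with the parabolic coset decomposition in a Coxeter group. For $x\in\Gamma$ write $D(x)=\{s\in S\mid l_S(xs)<l_S(x)\}$ for its right descent set. The starting point is the elementary identity $\sum_{T\subseteq D(x)}(-1)^{|T|}$, which is $1$ when $D(x)=\emptyset$ and $0$ otherwise; since $D(x)=\emptyset$ holds exactly when $x$ is the identity, multiplying by $t^{l_S(x)}$, summing over $x\in\Gamma$, and interchanging the two summations gives
\begin{equation*}
\sum_{T\subseteq S}(-1)^{|T|}\Bigl(\sum_{\substack{x\in\Gamma\\ T\subseteq D(x)}}t^{l_S(x)}\Bigr)=1.
\end{equation*}

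Next I would evaluate the inner sum for a fixed $T$. Write $x=x^{T}v$ with $v\in\Gamma_T$ and $x^{T}$ the minimal-length representative of the coset $x\Gamma_T$; this decomposition is unique and length-additive, $l_S(x)=l_S(x^{T})+l_S(v)$, and the additivity persists after right multiplication of $x^{T}$ by any element of $\Gamma_T$, from which one reads off that $D(x)\cap T$ equals the right descent set of $v$ computed inside $\Gamma_T$. Hence $T\subseteq D(x)$ if and only if $v$ has every generator of $T$ as a right descent. An element of a Coxeter group whose right descent set is the whole generating set is the longest element, and such an element exists precisely when the group is finite; so the inner sum is empty unless $\Gamma_T$ is finite, i.e. unless $T\in F$, and for such $T$ it runs exactly over $\{x^{T}w_{0}(T)\mid x^{T}\in\Gamma^{T}\}$, where $w_{0}(T)$ is the longest element of $\Gamma_T$ and $\Gamma^{T}$ is the set of minimal coset representatives. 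Using $\sum_{x^{T}\in\Gamma^{T}}t^{l_S(x^{T})}={\cal F}_S(t)/{\cal F}_T(t)$ and setting $N_T=l_S(w_0(T))$, this gives
\begin{equation*}
\sum_{\substack{x\in\Gamma\\ T\subseteq D(x)}}t^{l_S(x)}=t^{N_T}\,\frac{{\cal F}_S(t)}{{\cal F}_T(t)}.
\end{equation*}

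Substituting this back, the displayed identity becomes ${\cal F}_S(t)\sum_{T\in F}(-1)^{|T|}t^{N_T}/{\cal F}_T(t)=1$. To convert this into the statement of the theorem I would invoke the palindromicity ${\cal F}_T(t)=t^{N_T}{\cal F}_T(t^{-1})$ for finite $\Gamma_T$: this is immediate from Theorem~\ref{thm-1}, since each factor $[m_i+1]=1+t+\cdots+t^{m_i}$ satisfies $[m_i+1](t)=t^{m_i}[m_i+1](t^{-1})$ and $\sum_i m_i=N_T$ (and for reducible $\Gamma_T$ one multiplies the irreducible pieces). Thus $t^{N_T}/{\cal F}_T(t)=1/{\cal F}_T(t^{-1})$, so ${\cal F}_S(t)\sum_{T\in F}(-1)^{|T|}/{\cal F}_T(t^{-1})=1$; replacing $t$ by $t^{-1}$ and dividing yields $1/{\cal F}_S(t^{-1})=\sum_{T\in F}(-1)^{|T|}/{\cal F}_T(t)$, as required.

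The step I expect to be the real obstacle is the claim that an element with full right descent set must be the longest element of a (necessarily finite) Coxeter group --- equivalently, that $T\subseteq D(x)$ forces $T\in F$. The cleanest justification uses the geometric representation of $(\Gamma,S)$: if $v$ sends every simple root of $\Gamma_T$ to a negative root, then it sends the whole positive cone, hence every positive root of $\Gamma_T$, to the negative side, so the positive root system of $\Gamma_T$ has cardinality $l_S(v)<\infty$ and $\Gamma_T$ is finite; alternatively one can cite the corresponding statement in \cite{ref:hu}. A secondary point needing care is that ${\cal F}_S(t)$ is only a formal power series, so all the manipulations above are to be read as identities of rational functions --- legitimate because every $\Gamma_T$ with $T\in F$ is finite, so ${\cal F}_S(t)/{\cal F}_T(t)$ and the series $\sum_{x^T\in\Gamma^T}t^{l_S(x^T)}$ are well-defined rational functions.
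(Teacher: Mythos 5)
Your argument is correct, but note that the paper itself offers no proof of this statement: it is quoted as a known result of Steinberg, with the citation \cite{ref:ste} serving in place of an argument. So the comparison is really with the classical literature, and what you have written is essentially the standard proof of Steinberg's formula (as in the appendix of \cite{ref:ste} or \S 5.12 of \cite{ref:hu}): inclusion--exclusion over right descent sets, the length-additive factorization $x=x^T v$ with $v\in\Gamma_T$ and $x^T$ a minimal coset representative giving $\sum_{T\subseteq S}(-1)^{|T|}t^{N_T}{\cal F}_S(t)/{\cal F}_T(t)=1$ with only $T\in F$ contributing, and then palindromicity ${\cal F}_T(t)=t^{N_T}{\cal F}_T(t^{-1})$ of the finite parabolic growth polynomials to convert this into the stated identity. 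You correctly isolate and justify the one genuinely nontrivial ingredient, namely that an element whose right descents exhaust $T$ forces $\Gamma_T$ to be finite and the element to be its longest element $w_0(T)$; the root-system argument you sketch is the standard one. Two small remarks: palindromicity can be had more cheaply from the bijection $v\mapsto w_0(T)v$ on $\Gamma_T$, avoiding any appeal to the exponent identity $\sum_i m_i=N_T$ behind Theorem \ref{thm-1}; and your closing justification of the passage $t\mapsto t^{-1}$ is slightly misstated --- the finiteness of the $\Gamma_T$ does not by itself make ${\cal F}_S(t)/{\cal F}_T(t)$ rational, rather the rationality of ${\cal F}_S$ is itself a consequence of the identity ${\cal F}_S(t)\sum_{T\in F}(-1)^{|T|}t^{N_T}/{\cal F}_T(t)=1$ you have just derived (the finite sum is a rational function equal to $1$ at $t=0$), after which the substitution $t\mapsto t^{-1}$ is legitimate. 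These are presentational points only; the proof stands.
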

The following list of the exponents of the growth series are obtained in \cite{ref:hu,ref:kp}, 
where $[n]:=1+t+\cdots +t^{n-1}$ and $[n,m]=[n][m]$. 
\begin{table}[htb]
\begin{center}
  \begin{tabular}{|c|c|c|} \hline
  Group symbols & Exponents & $f_s(t)$  \\ \hline \hline
    $A_n$ & $1,2,\cdots ,n$ & $[2,3,\cdots ,n+1]$  \\ \hline
    $B_n$ & $1,3,\cdots ,2n-1$ & $[2,4,\cdots ,2n]$ \\ \hline
    $D_n$ & $1,3,\cdots ,2n-3,n-1$ & $[2,4,\cdots ,2n-2][n]$ \\ \hline
    $E_6$ & $1,4,5,7,8,11$ & $[2,5,6,8,9,12]$ \\ \hline
    $E_7$ & $1,5,7,9,11,13,17$ & $[2,6,8,10,12,14,18]$ \\ \hline
    $E_8$ & $1,7,11,13,17,19,23,29$ & $[2,8,12,14,18,20,24,30]$ \\ \hline
    $F_4$ & $1,5,7,11$ & $[2,6,8,12]$ \\ \hline
    $H_3$ & $1,5,9$ & $[2,6,10]$ \\ \hline
    $H_4$ & $1,11,19,29$ & $[2,12,20,30]$ \\ \hline
    $I_2(m)$ & $1,m-1$ & $[2,m]$ \\ \hline
  \end{tabular}
  \caption{Exponents and growth polynomials of irreducible finite Coxeter groups.}
\end{center}
\end{table} 
Denote by ${\cal F}_{\langle P\rangle}(t)$ the growth function of the reflection group given by 
an ideal Coxeter polyhedron $P$ in $\mathbb{H}^3$. 
Since $P$ has no vertices in $\mathbb{H}^3$, 
any non-trivial finite Coxeter subgroup of a Coxeter group obtained by $P$ 
is generated by either exactly one generator represented by a face of $P$ 
or generators represented by faces whose intersection is an edge of $P$. 

The symbol of the Coxeter subgroup obtained by exactly one generator is $A_1$. 
Thus, the growth function of this subgroup is $[2]=1+t$. 
Note that the number of Coxeter subgroups whose symbols are $A_1$ is $f$. 

The symbol of the Coxeter subgroup obtained by an $\frac{\pi}{2}$-edge of $P$ is $A_1\times A_1$. 
Thus, the growth function of this subgroup is $[1,1]=(1+t)^2$. 

In the same manner, 
we obtain the growth function of each Coxeter subgroup of the Coxeter group obtained by $P$. 
Thus, by Theorem \ref{thm-2}, we obtain
\begin{eqnarray*}
\frac{1}{{\cal F}_{\langle P\rangle}(t^{-1})}&=&1-\frac{f}{1+t}+\frac{e_2}{(1+t)^2}+\frac{e_3}{(1+t)(1+t+t^2)}+\frac{e_4}{(1+t)(1+t+t^2+t^3)} \\
&\ &+\frac{e_6}{(1+t)(1+t+t^2+t^3+t^4+t^5)} \\
&=&1-\frac{f}{1+t}+\frac{6f-2c-c_9-12}{2(1+t)^2}+\frac{2c-2f+2c_9-c_{10}+4}{2(1+t)(1+t+t^2)}\\
&\ &+\frac{c_{10}}{(1+t)^2(1+t^2)}+\frac{2c-2f-c_9-c_{10}+4}{2(1+t)^2(1+t+t^2)(1-t+t^2)}. 
\end{eqnarray*}
The second equality is obtained by using the identities $(3$-$6)$, $(9)$ and $(10)$. 
By simplfying the expression above, we get the following growth function's formula: 
\begin{equation*} 
{\cal F}_{\langle P\rangle}(t)=\frac{2(1+t)^2(1+t^2)(1+t+t^2)(1-t+t^2)}{(t-1)g_{\langle P\rangle}(t)}.
\end{equation*}
The function $g_{\langle P\rangle}(t)$ is as follows: 
\begin{eqnarray*}
g_{\langle P\rangle}(t)&:=&(2c-2)t^7+(2c-2f+2)t^6+(2c+2f-c_9-8)t^5\\
&\ &+(4c-4f+c_9-c_{10}+4)t^4+(4f-c_9+c_{10}-12)t^3\\
&\ &+(2c-2f+c_9)t^2+(2f-6)t-2. 
\end{eqnarray*}
By Cauchy-Hadamard's theorem, 
the radius of convergence of ${\cal F}_{\langle P\rangle}(t)$ is the reciprocal of the growth rate of $P$. 
Thus, to see the growth rate of $P$, it suffices to find out the smallest absolute value of the solution of 
$g_{\langle P\rangle}(t)=0$. \\
By substituting $t=0$ in $g_{\langle P\rangle}(t)$, we obtain $g_{\langle P\rangle}(0)=-2$. 
On the other hand, when $t=\frac{1}{2}$, 
\begin{equation*}
g_{\langle P\rangle}\left( \frac{1}{2}\right) =\frac{55c+50f+10c_9+4c_{10}-415}{64}. 
\end{equation*}
If $P$ is not a simplex, then the right-hand side of the above equation is positive since both $c$ and $f$ 
are at least 5. 

Now we consider the case that $P$ is a simplex. 
In this case, by Lemma \ref{lem-1}, the type of any cusp of $P$ is one of $(ii)$, $(iii)$ and $(iv)$. 

If $P$ has a cusp of type $(iv)$, then there is a $\frac{\pi }{6}$-edge. 
Both endpoints of this edge are cusps of type $(iv)$. 
Additionally, the other two cusps of $P$ also have to be cusps of type $(iv)$ by Lemma \ref{lem-1}. 
Thus, if the ideal simplex $P$ has a cusp of type $(ii)$, 
then any endpoint of $\frac{\pi }{3}$-edges is a cusp of type $(ii)$ since 
$P$ does not have a cusp of type $(iv)$. 
Therefore, any cusp of the ideal simplex $P$ is type $(iii)$ in the case that $P$ has a cusp of type $(iii)$. 

Hence if $P$ is a simplex, then there are three possibilities.  
The first one is that $P$ has two $\frac{\pi}{2}$-edges, two $\frac{\pi}{3}$-edges and two $\frac{\pi}{6}$-egdes. 
The second one is that $P$ has two $\frac{\pi}{2}$-edges and four $\frac{\pi}{4}$-edges. 
The last one is that $P$ has six $\frac{\pi}{3}$-edges. 
We can see them in the list of three-dimensional hyperbolic simplices written in \cite{ref:vin}. 
Their combinatorial structures are depicted in Fig.1; 
each circle represents a cusp and 
an edge labelled with $i$ $(i=2$, $3$, $4$, $6)$ is the $\frac{\pi}{i}$-edge. 
If $P$ is a simplex, then $c=f=4$. Moreover, 
if the simplex $P$ has two $\frac{\pi}{2}$-edges, two $\frac{\pi}{3}$-edges 
and two $\frac{\pi}{6}$-edges, then $c_{9}=c_{10}=0$. 
If $P$ is a simplex having two $\frac{\pi}{2}$ edges and four $\frac{\pi}{4}$ edges, 
then $c_{10}=4$ and $c_9=0$. 
When $P$ is a simplex which has four $\frac{\pi}{3}$ edges, then $c_9=4$ and $c_{10}=0$. 
Eventually, if $P$ is a simplex, then $g_{\langle P\rangle }\left( \frac{1}{2}\right) $ is also positive. 

By $g_{\langle P\rangle}(0)=-2$, $g_{\langle P\rangle}(\frac{1}{2})>0$ and 
the continuity of $g_{\langle P\rangle}(t)$ for $t$, 
the equation $g_{\langle P\rangle}(t)=0$ has at least one solution in the interval $\left( 0, \frac{1}{2}\right)$. 

In the next section, 
we prove that this solution is the unique solution of $g_{\langle P\rangle}(t)=0$ for $0<t<\frac{1}{2}$. 
In addition, we prove that the absolute value of this solution is smaller than 
any other solutions of $g_{\langle P\rangle}(t)=0$. 

\begin{figure}[htp]
\begin{center}
\begin{picture}(220, 60)
\put(0,0){\circle{5}}
\put(60,0){\circle{5}}
\put(30,60){\circle{5}}
\put(30,22){\circle{5}}

\put(3,0){\line(1,0){54}}
\put(2,2){\line(4,3){25.5}}
\put(58,2){\line(-4,3){25.5}}
\put(30,24){\line(0,1){34}}
\put(0,3){\line(1,2){28}}
\put(60,3){\line(-1,2){28}}

\put(6,27){$2$}
\put(23,30){$6$}
\put(48,27){$3$}
\put(16.5,5){$3$}
\put(36.5,5){$2$}
\put(27,-9){$6$}

\put(80,0){\circle{5}}
\put(140,0){\circle{5}}
\put(110,60){\circle{5}}
\put(110,22){\circle{5}}

\put(83,0){\line(1,0){54}}
\put(82,2){\line(4,3){25.5}}
\put(138,2){\line(-4,3){25.5}}
\put(110,24){\line(0,1){34}}
\put(80,3){\line(1,2){28}}
\put(140,3){\line(-1,2){28}}

\put(86,27){$4$}
\put(103,30){$2$}
\put(128,27){$4$}
\put(96.5,5){$4$}
\put(116.5,5){$4$}
\put(107,-9){$2$}

\put(160,0){\circle{5}}
\put(220,0){\circle{5}}
\put(190,60){\circle{5}}
\put(190,22){\circle{5}}

\put(163,0){\line(1,0){54}}
\put(162,2){\line(4,3){25.5}}
\put(218,2){\line(-4,3){25.5}}
\put(190,24){\line(0,1){34}}
\put(160,3){\line(1,2){28}}
\put(220,3){\line(-1,2){28}}

\put(166,27){$3$}
\put(183,30){$3$}
\put(208,27){$3$}
\put(176.5,5){$3$}
\put(196.5,5){$3$}
\put(187,-9){$3$}

\end{picture}
\end{center}
\caption{Combinatorial structures of ideal simplices in $\mathbb{H}^3$}
\end{figure}
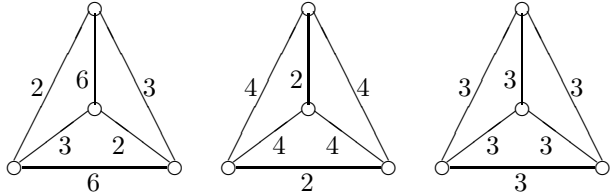


\section{Growth rates of ideal Coxeter polyhedra}
The aim of this section is to prove the following theorem. 
\begin{thm}\label{thm-3}
The growth rate of the reflection group given by an ideal Coxeter polyhedron 
in $\mathbb{H}^3$ is a Perron number. 
\end{thm}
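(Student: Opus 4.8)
The plan is to reduce the theorem to two independent facts about the polynomial $g:=g_{\langle P\rangle}$ of Section 3, and then to concentrate almost all the effort on a root–localization statement. Recall from Section 3 that the growth rate is $\tau=1/r$, where $r$ is the least absolute value among the roots of $g$, that $g(0)=-2<0$ and $g(1/2)>0$, so $g$ has at least one root in $(0,1/2)$. Since a Perron number is by definition a real algebraic integer greater than $1$ all of whose conjugates have strictly smaller modulus, it suffices to prove: (A) $\tau$ is an algebraic integer; and (B) $g$ has exactly one root in the closed disc $\overline{D}:=\{z\in\mathbb{C}:|z|\le 1/2\}$. Granting (B), the root in $(0,1/2)$ provided by Section 3 must be that unique root of $g$ in $\overline D$, hence it is $r$, and every other root $z$ of $g$ satisfies $|z|>1/2>r$; passing to the reverse polynomial, every conjugate of $\tau=1/r$ other than $\tau$ itself is of the form $1/z$ with $|1/z|<1/r=\tau$, and $\tau=1/r>2>1$ is real, so together with (A) the theorem follows.

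For (A) I would argue as follows. The identities of Section 2 force $c_9,c_{10},c_{11}$ to be even: identity $(6)$ gives $c_{11}=2e_6$, then identity $(4)$ shows $c_9$ is even, and then identity $(3)$ shows $c_{10}$ is even. Consequently every coefficient of $g$ is even while its constant term is $-2$. Therefore $-\tfrac12 t^{7}g(1/t)$ is a monic polynomial with integer coefficients, and dividing the relation $g(r)=0$ by $r^{7}$ shows that $\tau=1/r$ is one of its roots; hence $\tau$ is an algebraic integer.

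The substance of the theorem is (B), which I would establish in two steps. First, uniqueness on the real segment: since $\mathcal F_{\langle P\rangle}(t)=\dfrac{2(1+t)^{2}(1+t^{2})(1+t+t^{2})(1-t+t^{2})}{(t-1)\,g(t)}$ has non‑negative Taylor coefficients and its numerator is strictly positive on $\mathbb R$, the reciprocal $1/\mathcal F_{\langle P\rangle}(t)$ is positive for $0\le t<r$; as $t-1<0$ there, this forces $g(t)<0$ on $[0,r)$. It then remains to exclude a second real root in $(r,1/2)$, for which one can show that $g$ has at most one zero in $(0,1/2)$, e.g. by verifying $g'(t)>0$ on that interval after grouping the terms of $g'$ so that the possibly negative coefficients of $t^{2},t^{4},t^{6}$ in $g$ are absorbed by neighbouring terms, using $c\ge 5$, $f\ge 5$ in the non‑simplex case, the relation $c_9+c_{10}+c_{11}=2c-2f+4\ge 0$, and treating the three ideal simplices of Fig. 1 (where $c=f=4$) by direct computation. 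Second, absence of any other root of $g$ in $\overline D$ — in particular any non‑real or negative root with $|z|\le1/2$: here I would either apply Rouché's theorem on the circle $|z|=1/2$ against a well‑chosen low‑degree comparison polynomial, or compute the winding number of $z\mapsto g(z)$ (equivalently of $z\mapsto 1/\mathcal F_{\langle P\rangle}(z)$ through its Steinberg expansion $1-\tfrac{fz}{1+z}+\tfrac{e_2z^{2}}{(1+z)^{2}}+\cdots$) along that circle, and show it equals $1$.

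The hard part will be exactly this last point. A naive triangle‑inequality estimate on $|z|=1/2$ is hopeless: the coefficients of $g$ are not all of one sign (for instance $2c-2f+2$ and $2c-2f+c_9$ may be negative), and the sum of the moduli of the non‑constant coefficients weighted by the corresponding powers of $1/2$ far exceeds $|{-2}|=2$, so essential cancellations cannot be ignored. The proof therefore has to exploit the precise linear relations among $c,f,c_9,c_{10}$ — pairing consecutive terms $t^{k}$ and $t^{k\pm1}$ so that each grouped coefficient is provably non‑negative and each grouped term has controllable modulus on $|z|=1/2$ — and will most likely need to dispose of the finitely many exceptional polyhedra (the three ideal simplices, and the configurations with few faces) by direct inspection, just as the setup in Section 3 already isolates the simplex case.
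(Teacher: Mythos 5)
Your reduction is the right one, and part (A) (algebraic integrality via the parity of $c_9,c_{10},c_{11}$ and the reversed polynomial) is correct and matches the paper. But the heart of the theorem is exactly the root--localization step that you leave unproved: you only name two possible strategies (Rouch\'e on $|z|=\frac12$, or a winding-number computation) and yourself flag them as ``the hard part'' without carrying either out, so the proposal has a genuine gap at its central step. Moreover, the premise on which you base the difficulty is mistaken. You assert that the coefficients of $g_{\langle P\rangle}$ cannot all be sign-controlled (citing $2c-2f+2$ and $2c-2f+c_9$), but the paper's Lemma 2 --- proved via Andreev's theorem --- shows that for every ideal Coxeter polyhedron that is \emph{not} right-angled one has $c\geq f$ and $2f-c-4\geq 0$; combined with $c_9+c_{10}+c_{11}=2c-2f+4$ this yields that \emph{every} coefficient of $g_{\langle P\rangle}(t)+2$ is non-negative (inequalities (12)--(15) of the paper). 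Once this is known, precisely the ``naive'' estimate you dismiss works, in the Komori--Umemoto style: writing $g_{\langle P\rangle}(r_0)+2=2$ as a non-negative combination of powers of $r_0$, any root $z$ with $|z|<r_0$ gives $2=|g(z)+2|\leq g(|z|)+2<2$ by the monotonicity $\frac{\rm d}{{\rm d}t}g_{\langle P\rangle}(t)>0$ on $\left(0,\frac12\right)$, and the equality case on $|z|=r_0$ forces $\cos k\theta=1$ for the exponents with strictly positive coefficients, hence $z=r_0$. The right-angled case, where indeed $c=f-2<f$, some coefficients are negative, and the monotonicity on $\left(0,\frac12\right)$ can fail (the paper's Remark), is handled separately by the explicit factorization $g_{\langle R\rangle}(t)=2(t^2+1)(t^4+t^2+1)((f-3)t-1)$, giving growth rate $f-3\in\mathbb{N}$.

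Two further points. First, your statement (B) (exactly one root of $g$ in the closed disc $|z|\leq\frac12$) is stronger than what is needed and you give no proof of it; what the Perron property requires, and what the paper proves, is only that the root $r_0\in\left(0,\frac12\right)$ has strictly minimal modulus among all roots of $g_{\langle P\rangle}$, so that the reciprocals of the other roots have modulus $<\tau$. Second, your plan to prove $\frac{\rm d}{{\rm d}t}g_{\langle P\rangle}(t)>0$ on $\left(0,\frac12\right)$ uniformly, without first separating off the right-angled polyhedra and without the inequalities $c\geq f$, $2f-c-4\geq 0$, would fail: the derivative estimate in the paper uses exactly those inequalities, and positivity of the derivative is not guaranteed in the right-angled case. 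So the missing ingredients are the combinatorial Lemma 2 obtained from Andreev's theorem and the dichotomy right-angled versus not; with them the localization is elementary, and without them your Rouch\'e/winding-number outline is not an argument.
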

By observation in the previous section, to show Theorem \ref{thm-3}, 
we have to prove that the solution of $g_{\langle P\rangle}(t)=0$ for $t\in \left( 0,\frac{1}{2}\right)$ 
is the unique solution which has the smallest absolute value, where $P$ is an ideal Coxeter polyhedron in 
$\mathbb{H}^3$. 
First of all, we consider a three-dimensional ideal Coxeter polyhedron $Q$ which is not right-angled. 
Since $g_{\langle Q\rangle}(0)<0$ and $g_{\langle Q\rangle}\left( \frac{1}{2}\right) >0$, 
the following Proposition shows that the equation $g_{\langle Q\rangle}(t)=0$ has the unique solution 
in $\left( 0,\frac{1}{2}\right)$ which has the smallest absolute value of the solutions. 
\begin{prop}\label{prop-1}
Let $Q$ be an ideal Coxeter polyhedron in $\mathbb{H}^3$ which is not right-angled. 
Then $g_{\langle P\rangle }(t)$ has the following three properties. 

$(1)$ $g_{\langle Q\rangle}(t)<0$ for $-\frac{1}{2}<t<0$, 

$(2)$ $\frac{\rm d}{\rm dt}g_{\langle Q\rangle}(t)>0$ for $0<t<\frac{1}{2}$, 

$(3)$ the absolute value of any complex $($not real$)$ solution of $g_{\langle Q\rangle}(t)=0$ is greater than 
a real number $r_0\in \left( 0,\frac{1}{2}\right) $ which satisfies $g_{\langle Q\rangle}(r_0)=0$. 
\end{prop}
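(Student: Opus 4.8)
Everything reduces to controlling the signs of the coefficients of $g_{\langle Q\rangle}$ via the combinatorial identities of Section 2. Write $g_{\langle Q\rangle}(t)=-2+\sum_{k=1}^{7}b_{k}t^{k}$. Substituting $(9)$, $(10)$ (i.e. $2c-2f=c_{9}+c_{10}+c_{11}-4$ and $c_{8}=2f-c-4$) one gets
$$b_{1}=2f-6,\quad b_{2}=2c_{9}+c_{10}+c_{11}-4,\quad b_{3}=4f-12-c_{9}+c_{10},$$
$$b_{4}=b_{2}+c_{9}+c_{11},\quad b_{5}=4f-12+c_{10}+c_{11},\quad b_{6}=c_{9}+c_{10}+c_{11}-2,\quad b_{7}=2c-2.$$
Since $Q$ is not right-angled, $c_{9}+c_{10}+c_{11}=2c-2f+4\ge 2$; with $c\le 2f-4$ (from $c_{8}\ge 0$, hence $c_{9}\le c_{9}+c_{10}+c_{11}\le 2f-4$) and $c,f\ge 4$ this makes $b_{1},b_{3},b_{5},b_{6},b_{7}$ nonnegative at once (and $b_{1}>0$), so the only potentially negative coefficients are $b_{2}$ and $b_{4}=b_{2}+c_{9}+c_{11}$, both $\ge 0$ as soon as $b_{2}\ge 0$. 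As $c_{9},c_{10},c_{11}$ are all even (by $(3)$–$(6)$), $b_{2}<0$ forces $2c_{9}+c_{10}+c_{11}=2$, i.e. $c_{9}=0$ and $(c_{10},c_{11})\in\{(2,0),(0,2)\}$. In the first case the two $\frac{\pi}{4}$-edges, in the second the unique $\frac{\pi}{6}$-edge and the unique $\frac{\pi}{3}$-edge, would all have to join the same pair of vertices of $Q$ (an edge carrying one of those labels cannot meet a cusp of type $(i)$, which has only $\frac{\pi}{2}$-edges), impossible for a convex polyhedron. So I claim the driving lemma: \emph{for every non-right-angled ideal Coxeter polyhedron $Q\subset\mathbb{H}^{3}$ one has $b_{k}\ge 0$ for all $k$, with $b_{1}>0$.}

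\textbf{Parts $(1)$ and $(2)$.} Granting the lemma, $(2)$ is immediate: $\frac{\mathrm{d}}{\mathrm{d}t}g_{\langle Q\rangle}(t)=\sum_{k=1}^{7}kb_{k}t^{k-1}\ge b_{1}>0$ for all $t\ge 0$. For $(1)$ I would use that $g_{\langle Q\rangle}(t)$ is affine-linear in $(c,f,c_{9},c_{10})$: write $g_{\langle Q\rangle}(t)=cA(t)+fB(t)+c_{9}C(t)+c_{10}D(t)+E(t)$. For $t=-s$ with $0<s<\tfrac12$ one checks $A(-s)>0$, $B(-s)<0$, $C(-s)>0$, $D(-s)<0$; hence $-g_{\langle Q\rangle}(-s)$ is an affine functional of $(c,c_{9},c_{10})$ on the feasibility polytope $\{\,c\le 2f-4,\ c_{9},c_{10}\ge 0,\ c_{9}+c_{10}\le 2c-2f+4\,\}$, and those signs push its minimum to the vertex $c=c_{9}=2f-4$, $c_{8}=c_{10}=c_{11}=0$. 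There $-g_{\langle Q\rangle}(-s)=f\,p(s)+q(s)$ with
$$p(s)=2s-4s^{2}+2s^{3}-6s^{4}+4s^{5}-2s^{6}+4s^{7},\qquad q(s)=2-6s+12s^{2}-8s^{3}+16s^{4}-12s^{5}+6s^{6}-10s^{7},$$
and $p(0)=p(\tfrac12)=0$ while $p>0$ and $q>0$ on $(0,\tfrac12)$ (two routine one-variable estimates). Hence $g_{\langle Q\rangle}(-s)<0$ for $0<s<\tfrac12$, which is $(1)$.

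\textbf{Part $(3)$.} By $(1)$, $(2)$ and $g_{\langle Q\rangle}(0)=-2<0<g_{\langle Q\rangle}(\tfrac12)$ there is a unique $r_{0}\in(0,\tfrac12)$ with $g_{\langle Q\rangle}(r_{0})=0$; it is simple, it is the only real zero in $(-\tfrac12,\tfrac12)$, and $g_{\langle Q\rangle}\le 0$ on $[0,r_{0}]$. Now take $z\in\mathbb{C}$ with $|z|\le r_{0}$. Since $\sum_{k=1}^{7}b_{k}t^{k}=g_{\langle Q\rangle}(t)+2$ and every $b_{k}\ge 0$,
$$\bigl|g_{\langle Q\rangle}(z)+2\bigr|=\Bigl|\sum_{k=1}^{7}b_{k}z^{k}\Bigr|\le\sum_{k=1}^{7}b_{k}|z|^{k}=g_{\langle Q\rangle}(|z|)+2\le g_{\langle Q\rangle}(r_{0})+2=2,$$
the last step because $g_{\langle Q\rangle}$ is nondecreasing on $[0,\tfrac12)$ by $(2)$. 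If $g_{\langle Q\rangle}(z)=0$ the left-hand side equals $2$, forcing equality throughout: $g_{\langle Q\rangle}(|z|)=0$ gives $|z|=r_{0}$, and equality in the triangle inequality forces each $b_{k}z^{k}$ to be a nonnegative real multiple of one unit vector, which must be $+1$ since the $b_{k}z^{k}$ sum to $2>0$; as $b_{1}>0$, $z$ is a nonnegative real, so $z=|z|=r_{0}$. Hence any zero of $g_{\langle Q\rangle}$ other than $r_{0}$, in particular any non-real one, has modulus strictly greater than $r_{0}$, which is $(3)$.

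\textbf{Where the work is.} The substantive step is the opening lemma: extracting $b_{k}\ge 0$ from the cusp combinatorics and excluding the two degenerate distributions $(c_{10},c_{11})\in\{(2,0),(0,2)\}$ by the multiple-edge argument. Once that is in place, $(2)$ is a line, $(1)$ is a linear-programming reduction to the two one-variable inequalities $p\ge 0$, $q>0$ on $[0,\tfrac12]$, and $(3)$ is the single triangle-inequality estimate above; the only points needing care are those one-variable checks and the vertex-optimality claim in $(1)$.
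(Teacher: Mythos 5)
Your proof is correct, but it follows a genuinely different route from the paper's. The paper's combinatorial engine is Lemma 2 ($2f-c-4\ge 0$ and $c\ge f$), whose second inequality is obtained by showing $c_9+c_{10}+c_{11}\ge 4$ via a case analysis that invokes Andreev's theorem (condition $(d)$ of Lemma 3) to exclude a triangle of $\frac{\pi}{4}$-edges; this feeds into a factor-and-estimate proof of $(1)$ (factoring out $1+t^2$, discarding the $c_{10}$-term, bounding the $c_9$-term by $c$), a direct coefficient estimate for $(2)$, and the Komori--Umemoto triangle-inequality argument for $(3)$ via the coefficient inequalities $(12)$--$(15)$. You instead go straight for nonnegativity of all coefficients $b_1,\dots ,b_7$ of $g_{\langle Q\rangle}+2$ with $b_1>0$: after substituting $(9)$--$(10)$ the only delicate one is $b_2=2c_9+c_{10}+c_{11}-4$, which you settle by the evenness of $c_9$, $c_{10}$, $c_{11}$ (the paper records this only in Section 4) plus the observation that $(c_9,c_{10},c_{11})=(0,2,0)$ or $(0,0,2)$ would force two distinct edges joining the same pair of ideal vertices, impossible since the $1$-skeleton of a convex polyhedron is simple. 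This avoids Andreev's theorem and the inequality $c\ge f$ entirely, makes $(2)$ a one-liner, and your $(3)$ is essentially the paper's argument (needing only $b_1>0$ at the equality step, slightly cleaner than the paper's appeal to the coefficients of $t$, $t^5$, $t^6$, $t^7$). Your $(1)$ replaces the paper's estimate by a linear-programming reduction to the vertex $c=c_9=2f-4$, $c_{10}=c_{11}=0$ of the feasibility polytope, which is legitimate since the true data lie in that polytope and the sign pattern $A(-s)>0$, $C(-s)>0$, $D(-s)<0$ is immediate; the two one-variable facts you deferred do hold: $p(s)=2s(1-2s)(1+s^2)(1-s^3)>0$ on $\left(0,\tfrac12\right)$, and $q(s)=(2-6s+12s^2-8s^3)+s^4(16-12s+6s^2-10s^3)>0$ there because the first bracket has derivative $-6(1-2s)^2\le 0$ and value $1$ at $s=\tfrac12$, while the second bracket is clearly positive. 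So there is no gap; what the paper's route buys is the geometric inequality $c\ge f$ (of independent interest), while yours buys a shorter, more self-contained proof whose only inputs are the parity identities and simplicity of the $1$-skeleton.
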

\begin{rem}
{\rm If $Q$ is a right-angled polyhedron, then there may exist $t_0\in \left( 0, \frac{1}{2}\right)$ such that 
$\left. \frac{\rm d}{\rm dt}g_{\langle Q\rangle}(t)\right|_{t=t_0} \leq 0$. 
That is why we exclude the case that $Q$ is right-angled in Proposition 1. }
\end{rem}
Before proving Proposition \ref{prop-1}, we need to prove the following lemma. 
\begin{lem}\label{lem-2}
Let $f$ be the number of faces of a three-dimensional hyperbolic ideal Coxeter polyhedron 
which is not right-angled, and $c$ the number of its cusps. Then the following identities hold. 
\begin{equation}
2f-c-4\geq 0,
\end{equation}
\begin{equation}
c\geq f.
\end{equation}
\end{lem}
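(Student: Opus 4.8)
The plan is to treat the two inequalities separately, the second being where the real work lies. For the first, I would simply note that identity $(9)$ reads $2f-c-4=c_8$, and $c_8\geq 0$ because it counts the cusps of type $(i)$; hence $2f-c-4\geq 0$, with equality exactly when $Q$ has no type-$(i)$ cusp. Nothing further is needed here.

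For $c\geq f$ I would first rephrase the goal via $(10)$: since $c_9+c_{10}+c_{11}=2c-2f+4$, the inequality $c\geq f$ is equivalent to $c_9+c_{10}+c_{11}\geq 4$, i.e.\ that $Q$ has at least four cusps not of type $(i)$. The first step is a parity observation drawn from identities $(3)$, $(4)$, $(6)$: $c_{11}=2e_6$ is even, then $3c_9=2e_3-c_{11}$ forces $c_9$ even, and $c_{10}=2e_2-4c_8-c_{11}$ is even; hence $c_9+c_{10}+c_{11}$ is even. The second step: since $Q$ is not right-angled, not all of its cusps are of type $(i)$ (a right-angled ideal polyhedron has only type-$(i)$ cusps), so $c_9+c_{10}+c_{11}\geq 1$, and by parity $\geq 2$; combined with $(10)$ this already gives $c\geq f-1$. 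The remaining — and genuinely delicate — step is to exclude the borderline case $c=f-1$.

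So the heart of the argument is: assume $c=f-1$, equivalently $c_9+c_{10}+c_{11}=2$; by the parity above exactly one of these three numbers equals $2$ and the other two vanish, which gives three cases. The geometric input I would exploit is that an edge of a convex polyhedron in $\mathbb{H}^3$ is a geodesic segment joining its two vertices, so any two cusps of $Q$ bound at most one edge. In the case $(c_9,c_{10},c_{11})=(2,0,0)$, identity $(4)$ gives $e_3=3$, and by Lemma \ref{lem-1} both endpoints of every $\frac{\pi}{3}$-edge are type-$(ii)$ cusps (type $(iv)$ being absent); but there are only two such cusps, so the three $\frac{\pi}{3}$-edges would all join the same pair — contradiction. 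In the case $(0,2,0)$, $e_4=c_{10}=2$, both endpoints of every $\frac{\pi}{4}$-edge lie among the two type-$(iii)$ cusps, so two distinct edges join the same pair — contradiction. In the case $(0,0,2)$, $e_6=1$ and $(4)$ gives $e_3=1$; the unique $\frac{\pi}{6}$-edge and the unique $\frac{\pi}{3}$-edge both have their two endpoints among the two type-$(iv)$ cusps (type $(ii)$ being absent), so these two distinct edges join the same pair — contradiction. Therefore $c\geq f$.

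I expect the last step to be the real obstacle: it is the only place where the fine combinatorial structure of Lemma \ref{lem-1} must be combined with the elementary fact that two vertices of a convex polyhedron in $\mathbb{H}^3$ bound at most one edge, and one must work through each of the three borderline configurations separately. Everything preceding it — the parity count and the bookkeeping with identities $(3)$, $(4)$, $(6)$, $(9)$, $(10)$ — is routine.
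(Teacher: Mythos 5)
Your proof is correct, and it takes a genuinely different route from the paper at the one place where the work lies. For inequality (11) you and the paper do the same thing: read off $c_8=2f-c-4\geq 0$ from identity (9). For $c\geq f$, both arguments reduce via (10) to showing $c_9+c_{10}+c_{11}\geq 4$, but from there the methods diverge. The paper splits into the cases $c_9\geq 1$, $c_{10}\geq 1$, and $c_{11}\geq 1$ with $c_9=0$, and in each case exhibits four cusps not of type $(i)$ by chasing the edges emanating from a given cusp; in the case $c_{10}\geq 1$ it must separately exclude $c_{10}=3$ (a triangle of $\frac{\pi}{4}$-edges) by invoking Andreev's theorem, condition $(d)$. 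You instead extract from (3), (4), (6) that $c_9$, $c_{10}$, $c_{11}$ are all even --- a parity fact the paper also records, but only later, in Section 4, to prove that the growth rate is an algebraic integer --- so after the trivial bound $c_9+c_{10}+c_{11}\geq 2$ the only value to exclude is $2$, and you dispose of the three borderline configurations $(2,0,0)$, $(0,2,0)$, $(0,0,2)$ using Lemma \ref{lem-1} together with the elementary fact that two ideal vertices span a unique geodesic and hence bound at most one edge. Your route is more economical: it avoids Andreev's theorem and the edge-chasing entirely, at the cost of relying on the parity bookkeeping. The paper's route yields slightly sharper local conclusions (for instance $c_{10}\geq 4$ whenever $c_{10}\geq 1$), but nothing beyond the inequality $c\geq f$ is used elsewhere, so your argument fully suffices for the lemma.
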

To prove this lemma, we use the following lemma.  
\begin{lem}[\cite{ref:and}]\label{thm:And} 
An acute-angled almost simple polyhedron of finite volume with given dihedral angles, 
other than a tetrahedron or a triangular prism, exists in $\mathbb{H}^3$ 
if and only if the following conditions are satisfied$:$ 

$(a)$ if three faces meet at a vertex, then the sum of the dihedral angles 
between them is greater than $\pi ;$ 

$(b)$ if three faces meet at a cusp, then the sum of the dihedral angles 
between them is equal to $\pi ;$ 

$(c)$ if four faces meet at a vertex or a cusp, then all the 
dihedral angles between them equal $\frac{\pi}{2} ;$ 

$(d)$ if three faces are pairwise adjacent but share neither a vertex nor a cusp, 
then the sum of the dihedral angles between them is less than $\pi ;$ 

$(e)$ if a face $F_i$ is adjacent to faces $F_j$ and $F_k$, 
while $F_j$ and $F_k$ are not adjacent but have a common cusp which $F_i$ 
does not share, then at least one of the angles formed by 
$F_i$ with $F_j$ and with $F_k$ is different from $\frac{\pi}{2} ;$ 

$(f)$ if four faces are cyclically adjacent but meet at neither a vertex nor a cusp, 
then at least one of the dihedral angles between them is different from $\frac{\pi}{2}$. 
\end{lem}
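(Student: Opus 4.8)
The plan is to prove this (Andreev's theorem) by the \emph{method of continuity}, in the spirit of Andreev's original deformation argument and its later rigorous treatments. Fix the combinatorial type: an abstract finite-volume polyhedron $C$ (equivalently, a $3$-connected planar graph together with the data recording which vertices are $3$-valent, which are $4$-valent, and which are cusps), assumed to be neither a tetrahedron nor a triangular prism. I would introduce two spaces. First, the \emph{admissible angle space} $\mathcal{A}_C$: the set of assignments $\theta\colon E(C)\to(0,\tfrac{\pi}{2}]$ of an angle to each edge satisfying (a)--(f); since each of (a)--(f) is a linear inequality (or, for (b) and the right-angle clauses of (c), a linear equality) in the variables $\theta$, the set $\mathcal{A}_C$ is a convex polytope, and a preliminary combinatorial lemma shows it is nonempty, connected, and of the expected dimension. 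Second, the \emph{deformation space} $\mathcal{P}_C$ of finite-volume hyperbolic polyhedra of combinatorial type $C$, taken up to isometry; using the Schl\"afli differential and its second variation one shows $\mathcal{P}_C$ is a smooth manifold with $\dim\mathcal{P}_C=\dim\mathcal{A}_C$. The object of study is the dihedral-angle map $\alpha\colon\mathcal{P}_C\to\mathbb{R}^{E(C)}$.

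For the necessity direction (that (a)--(f) hold for every realizing polyhedron $P$) I would argue locally at each feature of $P$. The link of a finite vertex is a genuine spherical polygon, whose angles are the incident dihedral angles; for a spherical triangle the angle sum exceeds $\pi$, giving (a), and for four faces meeting at a point a dimension/convexity count on the four supporting half-spaces forces all four dihedral angles to be $\tfrac{\pi}{2}$, giving (c). The link of a cusp is a Euclidean polygon, so a triangular cusp has angle sum exactly $\pi$, giving (b). For the ``prismatic'' conditions (d), (e), (f) I would compute the signature of the Gram matrix of the outward unit normals of the three (resp. four) pairwise-adjacent faces in question: non-degeneracy of $P$ forces this Gram matrix to be nondegenerate of the appropriate Lorentzian signature, and translating that condition back into the entries, which are cosines of the dihedral angles, yields precisely the stated strict inequalities. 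This proves $\alpha(\mathcal{P}_C)\subseteq\mathcal{A}_C$.

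For sufficiency I need $\alpha\colon\mathcal{P}_C\to\mathcal{A}_C$ to be onto, which I would deduce from three facts and mapping-degree theory. (i) \emph{$\alpha$ is a local diffeomorphism} (infinitesimal rigidity): the derivative $d\alpha$ is everywhere invertible, proved by a Cauchy-type infinitesimal-arm-lemma argument on the $1$-skeleton combined with the Schl\"afli identity. (ii) \emph{$\alpha$ is injective} (global rigidity): a finite-volume hyperbolic polyhedron of type $C$ is determined up to isometry by its dihedral angles; given (i) this follows from a connectedness/monodromy argument, or from an independent convexity property of the volume functional. (iii) \emph{$\alpha$ is proper}: if $\theta_n=\alpha(P_n)$ converges to a point of $\mathcal{A}_C$, then the $P_n$ do not degenerate — no face collapses to a point, no edge shrinks to zero length (which would merge vertices and, in the limit, contradict one of (a), (d), (e), (f)), and the $P_n$ remain in a compact part of the space of polyhedra modulo isometry. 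Combining (i)--(iii): $\alpha$ is a proper injective local homeomorphism into the connected manifold $\mathcal{A}_C$, so its image is both open and closed, hence equals $\mathcal{A}_C$; this realizes every admissible angle datum, and uniqueness is exactly (ii). The two excluded combinatorial types are then treated by hand — for the tetrahedron by directly analyzing the signature of the $4\times 4$ Gram matrix, and for the triangular prism by an explicit parametrization — exhibiting in each case the different constraint set that governs the angle-to-polyhedron correspondence there.

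I expect step (iii), properness, to be the main obstacle: ruling out every mode of degeneration of a sequence $P_n$ is delicate, and conditions (a)--(f) must be used essentially and uniformly to control the thick--thin geometry of a degenerating sequence and to force any geometric limit to be again a nondegenerate finite-volume polyhedron of type $C$ rather than a lower-complexity object. Establishing infinitesimal rigidity (step (i)) simultaneously in the presence of ordinary vertices, $4$-valent vertices, and cusps is a close second, since the local models and hence the local contributions to $d\alpha$ differ at each type of feature.
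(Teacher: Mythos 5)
First, note that the paper does not prove this statement at all: it is Andreev's theorem, quoted verbatim with a citation to \cite{ref:and} and used as a black box (only condition $(d)$ is ever invoked, in the proof of Lemma 2). So there is no ``paper proof'' to compare against; what you have written is an outline of the standard continuity-method proof from the literature (Andreev's original argument and its corrected treatment by Roeder--Hubbard--Dunbar), not an argument the paper attempts.

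Judged as a proof, your proposal has a genuine structural gap beyond the steps you yourself flag as hard. The open-and-closed (or degree) argument you describe shows only that the image of $\alpha\colon\mathcal{P}_C\to\mathcal{A}_C$ is open and closed in the connected set $\mathcal{A}_C$, hence is either empty or all of $\mathcal{A}_C$. Nonemptiness of $\mathcal{A}_C$ (which you assert via a ``preliminary combinatorial lemma'') does not give nonemptiness of $\mathcal{P}_C$: you still must exhibit at least one hyperbolic polyhedron realizing the combinatorial type $C$ with admissible angles, and this base-point construction --- done in the rigorous treatments by an induction over combinatorial types using explicit cutting and truncation constructions --- is the hardest part of the theorem and is precisely where Andreev's original argument contained an error. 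Your sketch never addresses it, so as written the continuity argument cannot conclude surjectivity. In addition, properness (your step (iii)), infinitesimal rigidity in the presence of cusps and $4$-valent ideal vertices (your step (i)), and the dimension count $\dim\mathcal{P}_C=\dim\mathcal{A}_C$ in the finite-volume setting (where conditions $(b)$ and $(c)$ impose equalities, so $\mathcal{A}_C$ sits in a proper affine subspace) are each substantial theorems that you name but do not prove; acknowledging them as obstacles does not discharge them. So the proposal is a reasonable road map to the known proof, but it is not itself a proof, and its surjectivity step is incomplete even at the level of logic.
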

A hyperbolic polyhedron in $\mathbb{H}^3$ is $almost$ $simple$ if any of its edges belongs only to 
two faces and any of its vertices not at infinity belongs only to three faces. 
Thus, any ideal Coxeter polyhedron in $\mathbb{H}^3$ is almost simple. 
\begin{proof}[Proof of Lemma \ref{lem-2}]
{\rm By identity $(9)$, it is clear that inequality (11) holds. 
To prove the inequality (12), we consider three possibilities: 

$c_9\geq 1$, 

$c_{10}\geq 1$, 

$c_{11}\geq 1$ and $c_9=0$. 

If an ideal Coxeter polyhedron satisfies the case that $c_9\geq 1$, 
then there exists a cusp of type $(ii)$. 
Denote this cusp by $v_1$. The cusp $v_1$ is shared by three $\frac{\pi}{3}$-edges. 
By Lemma 1, the endpoints of these three edges other than $v_1$ are either type $(ii)$ or $(iii)$. 
Thus $c_9+c_{11}\geq 4$. 

If an ideal Coxeter polyhedron satisfies the case that $c_{10}\geq 1$, 
then $c_{10}\geq 3$ since both endpoints of 
$\frac{\pi}{4}$-edge are counted in $c_{10}$. If $c_{10}$ is exactly 3, 
then the polyhedron has a triangle whose edges are $\frac{\pi}{4}$-edge 
since any endpoint of $\frac {\pi }{4}$-edges is the cusp of type $(iii)$. 

In this case, there are three faces which are adjacent to this triangle. 
These three faces are adjacent to each other and their dihedral angles are $\frac{\pi}{2}$ since any cusp 
shared by a $\frac{\pi}{4}$-edge 
is shared by exactly two $\frac{\pi}{4}$-edges and one $\frac{\pi}{2}$-edge by Lemma \ref{lem-1}. 
This contradicts Lemma \ref{thm:And} $(d)$. Thus $c_{10}\geq 4$. 

Now we consider the case that $c_{11}\geq 1$ and $c_9=0$. 
In this case, there is a $\frac{\pi}{6}$-edge. 
Each of two endpoints of this edge, which is type $(iv)$, are shared by two $\frac{\pi}{3}$-edges. 
Since we are assuming $c_9=0$, there are no cusps of type $(iii)$. 
Thus, the endpoints of those two $\frac{\pi}{3}$-edges are different and they give two cusps of type $(iii)$. 
Hence $c_{11}\geq 4$. 

Thus, in any case, we obtain 
\begin{equation*}
c_9+c_{10}+c_{11}\geq 4.
\end{equation*}
By using this inequality and the identity (10), we obtain 
\begin{equation*}
2c-2f+4=c_9+c_{10}+c_{11}\geq 4. 
\end{equation*}
Hence we obtain the inequality (12).}
\end{proof}
Now we are ready to prove Proposition \ref{prop-1}. \\
\\
\textbf{Proof of Proposition \ref{prop-1} $(1)$}\\ 
Since $-t^3(t-1)c_{10}\leq 0$ for $t<0$, we obtain 
\begin{eqnarray*}
g_{\langle Q\rangle }(t)&=&(2c-2)t^7+(2c-2f+2)t^6+(2c+2f-c_9-8)t^5\\
&\ &+(4c-4f+c_9-c_{10}+4)t^4+(4f-c_9+c_{10}-12)t^3\\
&\ &+(2c-2f+c_9)t^2+(2f-6)t-2. \\
&=&(1+t^2)(2(c-1)t^5+2(c-f+1)t^4+2(f-3)t^3+2(c-f+1)t^2 \\
&\ &+2(f-3)t-2-t^2(t-1)c_9)-t^3(t-1)c_{10} \\
&\leq &(1+t^2)(2(c-1)t^5+2(c-f+1)t^4+2(f-3)t^3+2(c-f+1)t^2 \\
&\ &+2(f-3)t-2-t^2(t-1)c_9).
\end{eqnarray*}
Define $h_{\langle Q\rangle}(t)$ to be a part of the last expression in the inequality above: 
\begin{equation*}
h_{\langle Q\rangle}(t):= 2(c-1)t^5+2(c-f+1)t^4+2(f-3)t^3+2(c-f+1)t^2+2(f-3)t-2-t^2(t-1)c_9.
\end{equation*}
To prove Proposition \ref{prop-1} $(1)$, 
it is suffices to show that $h_{\langle Q\rangle}(t)<0$ for $-\frac{1}{2}<t<0$. 
Since $0<-t^2(t-1)<\frac{1}{8}$ for $-\frac{1}{2}<t<0$ and $c_9\leq c$, we get an inequality: 
\begin{equation*}
-t^2(t-1)c_9<c. 
\end{equation*}
By this inequality and Lemma \ref{lem-2}, we obtain the upper bound of $h_{\langle Q\rangle }(t)$ as follows: 
\begin{eqnarray*}
h_{\langle Q\rangle}(t)&<&2(c-1)t^5+2(c-f+1)t^4+2(f-3)t^3+2(c-f+1)t^2+2(f-3)t-2+c \\
&\leq &2(f-1)t^5+2(2f-4-f+1)t^4+2(f-3)t^3+2(2f-4-f+1)t^2 \\
&\ &+2(f-3)t-2+2f-5 \\
&=&2(f-1)t^5+2(f-3)t^4+2(f-3)t^3+2(f-3)t^2+2(f-3)t+2f-7. 
\end{eqnarray*}
Denote the last expression of the inequality above by $i_{\langle Q\rangle}(t)$, 
i.e. $i_{\langle Q\rangle }(t):=2(f-1)t^5+2(f-3)t^4+2(f-3)t^3+2(f-3)t^2+2(f-3)t+2f-7$. 
Then 
\begin{eqnarray*}
\frac{\rm d}{{\rm d}t}i_{\langle Q\rangle}(t)&=& 
10(f-1)t^4+8(f-3)t^3+6(f-3)t^2+4(f-3)t+2(f-3) \\
&=&10(f-1)t^4+2(f-3)(4t^3+3t^2+2t+1). 
\end{eqnarray*}
Since $4t^3+3t^2+2t+1>0$ for $-\frac{1}{2}<t<0$ and $f\geq 3$, 
$\frac{\rm d}{{\rm d}t}i_{\langle Q\rangle}(t)$ is positive. 
Thus $i_{\langle Q\rangle}(t)<i_{\langle Q\rangle}(0)=-7<0$ for $-\frac{1}{2}<t<0$. 
Hence $h_{\langle Q\rangle}(t)<0$ for $-\frac{1}{2}<t<0$. \\
\\
\\
\textbf{Proof of Proposition \ref{prop-1} $(2)$}\\ 
By Lemma \ref{lem-2}, we get the lower bound of the derivative of $g_{\langle Q\rangle}(t)$ with respect to $t$ as follows: 
\begin{eqnarray*}
\frac{\rm d}{{\rm d}t}g_{\langle Q\rangle}(t)&=& 
(14c-14)t^6+(12c-12f+12)t^5+(10c+10f-5c_9-40)t^4 \\
&\ &+(16c-16f+4c_9-4c_{10}+16)t^3+(12f-3c_9+3c_{10}-36)t^2 \\
&\ &+(4c-4f+2c_9)t+2f-6 \\
&\geq &14(f-1)t^6+12t^5+20(f-2)t^4+16t^3+12(f-3)t^2+2f-6 \\
&\ &+(-5t^4+4t^3-3t^2+2t)c_9+(-4t^3+3t^2)c_{10}. 
\end{eqnarray*}
Since $(-5t^4+4t^3-3t^2+2t)$ and $(-4t^3+3t^2)$ are positive for $0<t<\frac{1}{2}$ and $f\geq 3$, we obtain 
$\frac{\rm d}{\rm dt}g_{\langle Q\rangle}(t)>0$ for $0<t<\frac{1}{2}$. 

{\flushleft \textbf{Proof of Proposition \ref{prop-1} $(3)$}}\\ 
First we show  
\begin{equation}
4c-4f+c_9-c_{10}+4\geq 0. 
\end{equation}
This inequality is proved as follows. 
By identity $(10)$, we obtain $2c-2f\geq c_9+c_{10}-4$. Thus 
\begin{eqnarray*}
4c-4f+c_9-c_{10}+4&\geq &2(c_9+c_{10}-4)+c_9-c_{10}+4 \\
&=&3c_9+c_{10}-4. 
\end{eqnarray*}
If $3c_9+c_{10}-4$ is negative, then there are five possibilities: 
$(c_9, c_{10})=(0,0)$, $(0,1)$, $(0,2)$, $(0,3)$ or $(1,0)$. But in each of five cases, 
$4c-4f+c_9-c_{10}+4$ is positive because of the inequality (12). Thus we obtain (13). 
We also have to show two inequalities: 
\begin{eqnarray}
2c+2f-c_9-8&>&0, \\
4f-c_9+c_{10}-12&\geq &0. 
\end{eqnarray}
Since $c\geq c_9$ and $f\geq 4$, we obtain the inequality $(14)$. 
By inequality $(11)$, $c\geq c_9$ and $c\geq 4$, 
\begin{eqnarray*}
4f-c_9+c_{10}-12&\geq &2c+8-c_9+c_{10}-12 \\
&\geq &c-c_9+c-4 \\
&\geq &0. 
\end{eqnarray*}
Thus we we obtain the inequality $(15)$. 

Now we are ready to prove Proposition \ref{prop-1} $(3)$. 
The method of proving Proposition \ref{prop-1} $(3)$ is similar to that of proving the first lemma of \cite{ref:ku}. 
By Proposition \ref{prop-1} $(2)$, the real number $r_0\in \left( 0,\frac{1}{2}\right) $ which satisfies 
\begin{eqnarray*}
2&=&2(c-1){r_0}^7+2(c-f+1){r_0}^6+(2c+2f-c_9-8){r_0}^5\\
&\ &+(4c-4f+c_9-c_{10}+4){r_0}^4+(4f-c_9+c_{10}-12){r_0}^3\\
&\ &+(2c-2f+c_9){r_0}^2+(2f-6)r_0 
\end{eqnarray*}
is unique. 
Note that each coefficient of the right-hand side of 
the equality above is non-negative by inequalities (12-15). 
Suppose that there exists a complex number $z$ whose absolute value is less than $r_0$ and satisfying 
$g_{\langle Q\rangle}(z)=0$. 
Denote $z=re^{i\theta }$ where $0<r<r_0$ and $0\leq \theta <2\pi$. Then 
\begin{eqnarray*}
2&=&|g_{\langle Q\rangle}(z)+2| \\
&\leq &2(c-1)|(re^{i\theta })^7|+2(c-f+1)|(re^{i\theta })^6|+(2c+2f-c_9-8)|(re^{i\theta })^5| \\
&\ &+(4c-4f+c_9-c_{10}+4)|(re^{i\theta })^4|+(4f-c_9+c_{10}-12)|(re^{i\theta })^3| \\
&\ &+(2c-2f+c_9)|(r^{i\theta })^2|+(2f-6)|re^{i\theta }| \\
&=&2(c-1)r^7+2(c-f+1)r^6+(2c+2f-c_9-8)r^5+(4c-4f+c_9-c_{10}+4)r^4 \\
&\ &+(4f-c_9+c_{10}-12)r^3+(2c-2f+c_9)r^2+(2f-6)r \\
&=&g_{\langle Q\rangle}(r)+2. 
\end{eqnarray*}
By Proposition \ref{prop-1} $(2)$, 
we obtain $g_{\langle Q\rangle}(r)<g_{\langle Q\rangle}(r_0)=0$. A contradiction. 
Hence $r_0$ is the smallest absolute value among all zeros 
of $g_{\langle Q\rangle}(t)$. 

Now we consider a complex number $z$ whose absolute value is equal to $r_0$. 
Set $z=r_0e^{i\theta }$ and $0\leq \theta <2\pi$. Then $g_{\langle Q\rangle}(z)=0$ implies 
\begin{eqnarray*}
2&=& 2(c-1){r_0}^7\cos 7\theta +2(c-f+1){r_0}^6\cos 6\theta +(2c+2f-c_9-8){r_0}^5\cos 5\theta \\
&\ &+(4c-4f+c_9-c_{10}+4){r_0}^4\cos 4\theta +(4f-c_9+c_{10}-12){r_0}^3\cos 3\theta \\
&\ &+(2c-2f+c_9){r_0}^2\cos 2\theta +(2f-6)r_0\cos \theta \\
&\leq & 2(c-1){r_0}^7+2(c-f+1){r_0}^6+(2c+2f-c_9-8){r_0}^5\\
&\ &+(4c-4f+c_9-c_{10}+4){r_0}^4+(4f-c_9+c_{10}-12){r_0}^3\\
&\ &+(2c-2f+c_9){r_0}^2+(2f-6)r_0 \\
&=&2. 
\end{eqnarray*}
Thus, $\cos k\theta =1$ for $k\in\{1, 5, 6, 7\}$ since the coefficients of $t^k$ of $g_{\langle Q\rangle}(t)$ 
are positive. 
This means that $\theta =0$. Therefore $z=r_0$, and we have proved Proposition \ref{prop-1} $(3)$. \\

In the case that an ideal Coxeter polyhedron is right-angled, 
we can get the explicit value of its growth rate. 
Let $R$ be an ideal right-angled Coxeter polyhedron in $\mathbb{H}^3$. 
Note that $c=c_8$ and $c_9=c_{10}=c_{11}=0$ in this case. 
By substituting $c_8=c$ in identity (9), we obtain $c=f-2$. 
Thus 
\begin{eqnarray*}
g_{\langle R\rangle}(t)&=&(2f-6)t^7-2t^6+(4f-12)t^5-4t^4+(4f-12)t^3-4t^2+(2f-6)t-2 \\
&=&2(t^2+1)(t^4+t^2+1)((f-3)t-1). 
\end{eqnarray*}
Therefore $g_{\langle R\rangle}(t)=0$ has the unique solution $t=\frac{1}{f-3}$ 
which has the smallest absolute value. 
Since its growth rate $(f-3)$ is a natural number, it is also a Perron number. 
Note that the solution $t=\frac{1}{f-3}$ of $g_{\langle R\rangle}(t)=0$ 
is in $(0,\frac{1}{2})$ because an ideal right-angled polyhedron in $\mathbb{H}^3$ has 
at least eight faces (see the last paragraph of Section 2). 

Combining what we have seen above, Proposition \ref{prop-1} and two inequalities 
$g_{\langle P\rangle}(0)<0$, $g_{\langle P\rangle}(\frac{1}{2})>0$, we obtain the following proposition. 
\begin{prop}\label{prop-2} 
Let $P$ be an ideal Coxeter polyhedron in $\mathbb{H}^3$. 
The equation $g_{\langle P\rangle}(t)=0$ has the unique solution $t=r_0\in \left( 0,\frac{1}{2}\right)$ 
whose absolute value is smaller than any other solution. 
In addition, the slope $\left. \frac{\rm d}{\rm dt}g_{\langle P\rangle}(t)\right|_{t=r_0}$ is positive. 
\end{prop}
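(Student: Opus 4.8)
The plan is to split the argument according to whether $P$ is right-angled, since Proposition~\ref{prop-1} — and, as the Remark notes, already its part $(2)$ — applies only to the non-right-angled case.

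For a right-angled ideal Coxeter polyhedron $R$, I would invoke the factorization $g_{\langle R\rangle}(t)=2(t^2+1)(t^4+t^2+1)((f-3)t-1)$ derived just above. The roots of $t^2+1$ and of $t^4+t^2+1$ all have modulus $1$ (they are, up to sign, roots of unity, since $t^2$ is a primitive cube root of unity in the second factor), whereas an ideal right-angled polyhedron in $\mathbb{H}^3$ has $f\geq 8$, so $1/(f-3)\leq 1/5<1/2<1$. Hence $t=1/(f-3)$ is the unique zero of $g_{\langle R\rangle}$ of smallest modulus and it lies in $\left(0,\frac12\right)$. Differentiating the factored form at $t=1/(f-3)$, where only the linear factor vanishes, leaves the positive product $2\bigl((f-3)^{-2}+1\bigr)\bigl((f-3)^{-4}+(f-3)^{-2}+1\bigr)(f-3)$, so the slope there is positive.

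For a non-right-angled ideal Coxeter polyhedron $P$, I would apply Proposition~\ref{prop-1} with $Q=P$. Part $(2)$ says $g_{\langle P\rangle}$ is strictly increasing on $\left(0,\frac12\right)$; combined with $g_{\langle P\rangle}(0)=-2<0$ and $g_{\langle P\rangle}\!\left(\frac12\right)>0$ (both established in Section~3), the intermediate value theorem yields a unique zero $r_0\in\left(0,\frac12\right)$, and strict monotonicity gives $\left.\frac{\rm d}{\rm dt}g_{\langle P\rangle}(t)\right|_{t=r_0}>0$. To see that $r_0$ has strictly smallest modulus among all zeros: monotonicity forces $g_{\langle P\rangle}<0$ on $(0,r_0)$; part $(1)$ forces $g_{\langle P\rangle}<0$ on $\left(-\frac12,0\right)$; and $g_{\langle P\rangle}(0)=-2\neq 0$; so no real zero other than $r_0$ can have modulus $\leq r_0$, while any remaining real zero satisfies $|t|\geq\frac12>r_0$. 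For the non-real zeros, part $(3)$ — or more precisely the argument proving it, which rules out zeros of modulus strictly less than $r_0$ and shows the only zero on the circle $|z|=r_0$ is $z=r_0$ itself — finishes the claim.

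The step I expect to require the most care is purely organizational: confirming that the two cases exhaust all ideal Coxeter polyhedra, and that what is actually used from Proposition~\ref{prop-1}$(3)$ is the strict minimality statement (no zero of modulus exactly $r_0$ besides $r_0$) rather than the weaker phrasing in its header. Since that is precisely what its proof delivers, assembling the pieces into a clean proof of Proposition~\ref{prop-2} should be routine once the right-angled factorization and Proposition~\ref{prop-1} are in hand.
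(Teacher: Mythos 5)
Your proposal is correct and follows essentially the same route as the paper: Proposition~2 is obtained there precisely by combining the explicit factorization $g_{\langle R\rangle}(t)=2(t^2+1)(t^4+t^2+1)((f-3)t-1)$ in the right-angled case (with $f\geq 8$ so that $1/(f-3)<\frac12$) with Proposition~1 and the sign conditions $g_{\langle P\rangle}(0)<0$, $g_{\langle P\rangle}\!\left(\frac12\right)>0$ in the non-right-angled case. Your only addition is spelling out the derivative of the factored form at $t=1/(f-3)$, which the paper leaves implicit.
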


Denote its growth rate $\frac{1}{t}$ by $\tau $ for the rest of this section. 
Now we want to prove that $\tau $ is an algebraic integer to complete the proof of Theorem \ref{thm-3}. 
Since $g_{\langle P\rangle }\left( \frac{1}{\tau }\right) =0$, 
$-\frac{\tau ^7}{2}g_{\langle P\rangle }\left( \frac{1}{\tau }\right) =0$. That is to say, 
\begin{eqnarray}
& &\tau ^7-(f-3)\tau ^6-\left( c-f+\frac{c_9}{2}\right) \tau ^5-\left( 2f-\frac{c_9}{2}+\frac{c_{10}}{2}-6\right) \tau ^4 
\nonumber \\
& &-\left( 2c-2f+\frac{c_9}{2}-\frac{c_{10}}{2}+2\right) \tau ^3 \nonumber \\
& &-\left( c+f-\frac{c_9}{2}-4\right) \tau ^2-(c-f+1)\tau ^2-c+1=0. 
\end{eqnarray}

To prove that $\tau $ is an algebraic integer, 
we need to show that any coefficient of $t^i$ of $g_{\langle P\rangle }(t)$ $(i=1, \cdots , 7)$ 
is even. 
This is satisfied if and only if both $c_9$ and $c_{10}$ are even numbers. 
By identity $(6)$, $c_{11}$ is even. Then, by $(4)$, $c_9$ is also even. 
Therefore, by $(10)$, we show that both $c_9$ and $c_{10}$ are even numbers. 

Note that any coefficient of $\tau ^i$ $(i=1, \cdots , 7)$ of the equation above is integer. 
Thus, the growth rate $\tau $ is an algebraic integer. 
By Lemma \ref{thm:And}, $t=\frac{1}{\tau }$ is the unique solution of $g_{\langle P\rangle }(t)=0$ 
which has the smallest absolute value. Therefore $\tau $ is the unique solution 
which has the biggest absolute value of the solution of the equation $(14)$. 
Hence we proved Theorem \ref{thm-3}. 

\section{The minimal growth rate of ideal Coxeter polyhedra}
In this section, we find an ideal Coxeter polyhedron which has the minimal growth rate. 
First of all, we compare the growth rates of ideal Coxeter simplices. 
As we have already explained in Section 3, 
there exist only three ideal Coxeter simplices up to isometry (see also Fig.1). 
One is a simplex which has two $\frac{\pi}{2}$-edges, two $\frac{\pi}{3}$-edges 
and two $\frac{\pi}{6}$-edges. We denote this polyhedron by $P_1$. 
Denote by $P_2$ another ideal Coxeter simplex
which has exactly two $\frac{\pi }{2}$-edges and four $\frac{\pi }{4}$-edges. 
We denote the last one by $P_3$. 
Since $P_1$ satisfies $f=c=c_{11}=4$ and $c_9=c_{10}=0$, we obtain 
\begin{eqnarray*}
g_{\langle P_1\rangle}(t)&=&6t^7+2t^6+8t^5+4t^4+4t^3+2t-2 \\
&=&2(1+t^2)(-1+t+t^2+t^3+t^4+3t^5). 
\end{eqnarray*}
Then its growth function ${\cal F}_{\langle P_1\rangle}(t)$ is as follows: 
\begin{equation*}
{\cal F}_{\langle P_1\rangle}(t)=\frac{(1+t)^2(1+t+t^2)(1-t+t^2)}{(1-t)(1-t-t^2-t^3-t^4-3t^5)}.
\end{equation*} 
Thus, The growth rate of $P_1$ is $\tau(P_1)\approx 2.030735$. 

The simplex $P_2$ satisfies $f=c=c_{10}=4$ and $c_9=0$. 
Therefore we obtain 
\begin{eqnarray*}
g_{\langle P_2\rangle}(t)&=&6t^7+2t^6+8t^5+8t^3+2t-2 \\
&=&2(1+t+t^2)(1-t+t^2)(-1+t+t^2+3t^2). 
\end{eqnarray*}
Thus, we obtain its growth function ${\cal F}_{\langle P_2\rangle}(t)$ as follows: 
\begin{equation*}
{\cal F}_{\langle P_2\rangle}(t)=\frac{(1+t)^2(1+t^2)}{(1-t)(1-t-t^2-3t^3)}.
\end{equation*} 
Thus, its growth rate $\tau(P_2)$ is roughly $2.130395$. 

In the same manner as above, we can calculate the growth rate of $P_3$ denoted by $\tau (P_3)$. 
Because $P_3$ satisfies $f=c=c_9=4$ and $c_{10}=0$, 
we obtain 
\begin{eqnarray*}
g_{\langle P_3\rangle}(t)&=&6t^7+2t^6+4t^5+8t^4+4t^2+2t-2 \\
&=&2(1+t)(1+t^2)(1-t+t^2)(-1+t+3t^2). 
\end{eqnarray*}
Thus, the growth function ${\cal F}_{\langle P_3\rangle}(t)$ is represented as follows: 
\begin{equation*}
{\cal F}_{\langle P_3\rangle}(t)=\frac{(1+t)(1+t+t^2)}{(1-t)(1-t-3t^2)}.
\end{equation*}
And then we obtain the growth rate $\tau (P_3)\approx 2.302776$. 
Hence $\tau (P_1)<\tau (P_2)<\tau (P_3)$. 

Next, we show that the growth rate of any ideal Coxeter polyhedron which is not a simplex is bigger than 
that of $P_3$. 
Let $P'$ be an ideal Coxeter polyhedron which is not a simplex. 
From now on, we denote the number of faces of $P'$ by $f(P')$ and the number of cusps of $P'$ by $c(P')$. 
We also denote the number of cusps of type $(ii)$ $($resp. $(iii))$ by $c_9(P')$ $($resp. $c_{10}(P'))$. 

By Proposition \ref{prop-2}, we have only to consider 
the solution of $g_{\langle P\rangle}(t)=0$ in the interval $\left( 0,\frac{1}{2}\right)$ 
in order to determine the growth rate of an ideal Coxeter polyhedron. 

Since $P'$ is not a simplex, $f(P')\geq 5$ and $c(P')\geq 5$. 
Thus, 
\begin{eqnarray*}
g_{\langle P'\rangle}(t)-g_{\langle P_3\rangle}(t)&=&2t^2(1+t^2+t^3+t^4+t^5)(c(P')-4)\\
&\ &+2t(1-t)(1+t^2)^2(f(P')-4) +t^2(1-t)(1+t^2)(c_9(P')-4)\\
&\ &+t^3(1-t)c_{10}(P') \\
&\geq &2t^2(1+t^2+t^3+t^4+t^5)+2t(1-t)(1+t^2)^2-4t^2(1-t)(1+t^2) \\
&=&2t(1-2t+4t^2-3t^3+4t^4+t^6) 
\end{eqnarray*}
for $0<t<\frac{1}{2}$. 
Since $-3t^3+4t^2-2t+1$ is monotonically decreasing, $-3t^3+4t^2-2t+1>\frac{5}{8}$ for $0<t<\frac{1}{2}$. 
Thus $g_{\langle P'\rangle}(t)-g_{\langle P_3\rangle}(t)>0$ for $0<t<\frac{1}{2}$. 
That is to say, the growth rate of any ideal Coxeter polyhedron which is not a simplex is greater than 
that of any ideal Coxeter simplex. 
Hence we obtain the following theorem. 
\begin{thm}\label{thm-4} 
An ideal Coxeter polyhedron in $\mathbb{H}^3$, which has the minimal growth rate is $P_1$.  
\end{thm}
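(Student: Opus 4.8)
The plan is to combine the three ingredients already assembled: the explicit numerical values $\tau(P_1)\approx 2.030735 < \tau(P_2)\approx 2.130395 < \tau(P_3)\approx 2.302776$ for the three ideal Coxeter simplices, the reduction (Proposition \ref{prop-2}) that the growth rate of any ideal Coxeter polyhedron $P$ equals $1/r_0$ where $r_0\in(0,\tfrac12)$ is the unique smallest-modulus root of $g_{\langle P\rangle}(t)$, and the polynomial comparison showing $g_{\langle P'\rangle}(t) > g_{\langle P_3\rangle}(t)$ on $(0,\tfrac12)$ for every non-simplex $P'$. First I would recall that by Proposition \ref{prop-2} the function $g_{\langle P_3\rangle}(t)$ is strictly increasing on $(0,\tfrac12)$ and negative at $0$, so its unique root $r_0(P_3)$ in that interval is determined by a sign change; since $g_{\langle P'\rangle}(t)>g_{\langle P_3\rangle}(t)>0$ for all $t\in[r_0(P_3),\tfrac12)$, the root $r_0(P')$ of $g_{\langle P'\rangle}$ must lie strictly to the left of $r_0(P_3)$, i.e. $r_0(P') < r_0(P_3)$, hence $\tau(P')=1/r_0(P') > 1/r_0(P_3) = \tau(P_3)$.

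Next I would combine this with the already-computed chain $\tau(P_1) < \tau(P_2) < \tau(P_3)$: every non-simplex ideal Coxeter polyhedron has growth rate exceeding $\tau(P_3)$, and among the three simplices $P_1$ has the smallest growth rate. Therefore $\tau(P_1)$ is strictly smaller than the growth rate of every other ideal Coxeter polyhedron in $\mathbb{H}^3$, which is exactly the claim of Theorem \ref{thm-4}. For completeness I would note that the list of exactly three ideal Coxeter simplices, used here, was established in Section 3 via Lemma \ref{lem-1} and Vinberg's classification, so no polyhedron is omitted from the comparison.

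The only point needing a little care — and the one I would present as the main step rather than an obstacle — is the monotonicity argument comparing the roots: one must know that $g_{\langle P_3\rangle}$ is monotone on the whole interval $(0,\tfrac12)$ (guaranteed by Proposition \ref{prop-1}(2), since $P_3$ is not right-angled) so that the inequality $g_{\langle P'\rangle} > g_{\langle P_3\rangle}$ translates cleanly into $r_0(P') < r_0(P_3)$; without monotonicity a pointwise inequality between two functions does not by itself order their zeros. Since $P_3$ is a simplex and hence not right-angled, Proposition \ref{prop-1}(2) applies directly, and there is no genuine difficulty. The remaining verifications — the three approximate root values and the polynomial identity $g_{\langle P'\rangle}-g_{\langle P_3\rangle} = 2t(1-2t+4t^2-3t^3+4t^4+t^6)$ together with its positivity on $(0,\tfrac12)$ — are the routine computations carried out above, so the theorem follows.
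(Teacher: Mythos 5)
Your proposal is correct and follows essentially the same route as the paper: compare the three simplices numerically, show $g_{\langle P'\rangle}(t)>g_{\langle P_3\rangle}(t)$ on $\left(0,\tfrac12\right)$ for every non-simplex $P'$ using $f(P'),c(P')\geq 5$, and conclude via Proposition \ref{prop-2} that $\tau(P')>\tau(P_3)>\tau(P_2)>\tau(P_1)$. The only difference is cosmetic: you make explicit (via Proposition \ref{prop-1}(2)) the step translating the pointwise inequality into an ordering of the roots $r_0(P')<r_0(P_3)$, which the paper leaves implicit, and your reference to the difference $g_{\langle P'\rangle}-g_{\langle P_3\rangle}$ as an identity equal to $2t(1-2t+4t^2-3t^3+4t^4+t^6)$ should be a lower bound rather than an equality, a harmless slip.
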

\begin{rem}
{\rm Note that the second (resp. the third) smallest growth rate of hyperbolic ideal Coxeter polyhedron is 
$\tau (P_2)$ (resp. $\tau (P_3)$). }
\end{rem}

\section{Growth rates and volumes of some ideal Coxeter polyhedra}
Some of known examples suggest that if the growth rates of an ideal Coxeter polyhedron is large, 
then its volume is also large. 
In this section, we see the relation between 
the growth rates and volumes of some ideal Coxeter polyhedra. 
To calculate the volume of some ideal Coxeter polyhedra, we use the following theorem. 
\begin{thm}[\cite{ref:mil}]\label{thm-5}
Let $T$ be an ideal tetrahedron satisfying that the dihedral angle at edge is equal to that of the opposite 
edge. 
Denote the different dihedral angles of $T$ by $\alpha $, $\beta $ and $\gamma $. 
We also denote the volume of $T$ by {\rm vol}$(T)$. Then 
\begin{equation*}
{\rm vol}(T)=\Lambda (\alpha )+\Lambda (\beta )+\Lambda (\gamma ), 
\end{equation*}
where $\Lambda$ is a Lobachevskij function defined by 
\begin{equation*}
\Lambda (x)=-\int_{0}^x\log |2\sin \zeta |d\zeta .
\end{equation*}
\end{thm}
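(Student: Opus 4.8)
The plan is to reproduce Milnor's classical decomposition argument. First I would normalise the tetrahedron: using an isometry of $\mathbb{H}^3$, place one ideal vertex of $T$ at $\infty$ in the upper half-space model, with Riemannian volume element $z^{-3}\,dx\,dy\,dz$, and rescale so that the Euclidean triangle $\Delta\subset\mathbb{C}=\partial\mathbb{H}^3\setminus\{\infty\}$ spanned by the other three ideal vertices $A,B,C$ is inscribed in the unit circle. In this position the three edges of $T$ that run to $\infty$ are vertical half-lines, and the dihedral angle along each is the Euclidean angle of $\Delta$ at the corresponding vertex; call these $\alpha,\beta,\gamma$, so $\alpha+\beta+\gamma=\pi$, and by the hypothesis of the theorem (which in fact holds automatically) the dihedral angle along each of the other three edges equals that of the opposite vertical edge, so $\alpha,\beta,\gamma$ are exactly the three distinct dihedral angles. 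The face of $T$ opposite $\infty$ is then the unit hemisphere $x^2+y^2+z^2=1$, and $T$ is the region $\{(x,y)\in\Delta,\ z\ge\sqrt{1-x^2-y^2}\}$.

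Next I would integrate out the vertical variable. Since $\int_{\sqrt{1-r^2}}^{\infty}z^{-3}\,dz=\tfrac12(1-r^2)^{-1}$ with $r^2=x^2+y^2<1$ on $\Delta$, this gives
\[
{\rm vol}(T)=\frac12\iint_{\Delta}\frac{dx\,dy}{1-x^2-y^2}.
\]
I would then cut $\Delta$ into six right triangles by joining the circumcentre $O$ (the origin) to $A,B,C$ and dropping the three perpendiculars from $O$ to the sides. Consider the piece lying over the right triangle with legs $OM$ and $MA$, where $M$ is the foot of the perpendicular on side $AB$. Passing to polar coordinates $(\rho,\varphi)$ about $O$ with $\varphi=0$ along $OM$, this triangle is $\{0\le\varphi\le\gamma,\ 0\le\rho\le\cos\gamma/\cos\varphi\}$: the distance from $O$ to the chord $AB$ is $\cos\gamma$, and the angle at $O$ equals the inscribed angle $\gamma=\angle ACB$. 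The inner $\rho$-integral equals $-\tfrac12\ln\frac{\sin(\gamma-\varphi)\sin(\gamma+\varphi)}{\cos^2\varphi}$, so this piece contributes
\[
-\frac14\int_0^{\gamma}\bigl(\ln\sin(\gamma-\varphi)+\ln\sin(\gamma+\varphi)-2\ln\cos\varphi\bigr)\,d\varphi .
\]

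To evaluate this I would use the substitutions $u=\gamma-\varphi$, $u=\gamma+\varphi$, $u=\tfrac{\pi}{2}-\varphi$ to rewrite the three terms through the single primitive $\int_0^x\ln\sin\zeta\,d\zeta=-\Lambda(x)-x\ln 2$, together with the special value $\Lambda(\pi/2)=0$ and the duplication formula $\Lambda(2x)=2\Lambda(x)-2\Lambda(\tfrac{\pi}{2}-x)$ (immediate from the Fourier series $\Lambda(x)=\tfrac12\sum_{n\ge1}n^{-2}\sin 2n x$). The $\ln 2$ terms cancel, and the displayed integral collapses to $\tfrac12\Lambda(\gamma)$. The mirror piece over the right triangle with legs $OM$ and $MB$ has the same angle $\gamma$ at $O$ and hence also contributes $\tfrac12\Lambda(\gamma)$, so the wedge $OAB$ contributes $\Lambda(\gamma)$; likewise $OBC$ contributes $\Lambda(\alpha)$ and $OCA$ contributes $\Lambda(\beta)$, and adding the three wedges gives ${\rm vol}(T)=\Lambda(\alpha)+\Lambda(\beta)+\Lambda(\gamma)$.

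The one genuine subtlety — and the step I expect to be the main obstacle — is the case when the circumcentre $O$ lies outside $\Delta$, i.e. when $\Delta$ has an obtuse angle, say $\gamma>\pi/2$. Then the wedge $OAB$ sticks out of $\Delta$ and must be subtracted rather than added, and the relevant angle at $O$ becomes $\pi-\gamma$ by the reflex-angle form of the inscribed-angle theorem; one must check that $-\Lambda(\pi-\gamma)=\Lambda(\gamma)$ (using that $\Lambda$ is odd and $\pi$-periodic) so that the signed sum of the three wedge contributions is still $\Lambda(\alpha)+\Lambda(\beta)+\Lambda(\gamma)$. Handling the degenerate boundary cases ($O$ on a side of $\Delta$, i.e. a right angle) and keeping the signs straight is where the argument needs care; the remaining computations are routine trigonometric calculus.
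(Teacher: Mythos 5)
The paper does not prove this statement at all --- it is quoted from Milnor's article \cite{ref:mil} and used as a black box --- so there is no internal proof to compare against; your argument is precisely the classical computation from that source (vertex at $\infty$ in the upper half-space model, triangle inscribed in the unit circle, reduction to $\frac12\iint_\Delta \frac{dx\,dy}{1-x^2-y^2}$, decomposition into six right triangles at the circumcentre, and evaluation via $\Lambda(\pi/2)=0$ and the duplication formula $\Lambda(2x)=2\Lambda(x)-2\Lambda(\tfrac{\pi}{2}-x)$). The computation checks out, including the signed treatment of the obtuse case via $-\Lambda(\pi-\gamma)=\Lambda(\gamma)$, so your proposal is a correct and complete outline of the standard proof.
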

The first derivative and the second derivative of $\Lambda (x)$ with respect to $x$ are as follows: 
\begin{eqnarray*}
\frac{\rm d}{{\rm d}x}\Lambda (x)&=&-\log (2\sin x), \\
\frac{\rm d^2}{{\rm d}x^2}\Lambda (x)&=&-\frac{1}{\tan x}. 
\end{eqnarray*}
Note that these derivatives are negative 
for $x\in \left( \frac{\pi}{6},\frac{\pi}{2}\right) $ and that $\Lambda \left(\frac{\pi }{2}\right) =0$. 
Thus, the graph of $y=\Lambda (x)$ for $\frac{\pi}{6}\leq x\leq \frac{\pi}{2}$ is depicted as in Fig.2.  
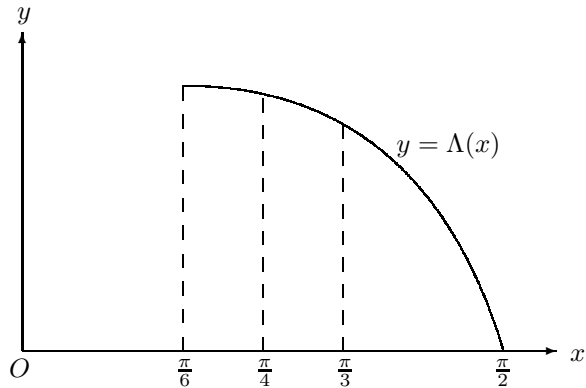
\begin{figure}[htp]
\begin{center}
\begin{picture}(220, 130)
\put(0,0){\vector(1,0){200}}
\put(0,0){\vector(0,1){120}}
\qbezier(60,100)(150,100)(180,0)
\multiput(60,0)(0,10.5){10}{\line(0,1){5}}
\multiput(90,0)(0,10){10}{\line(0,1){5}}
\multiput(120,0)(0,10){9}{\line(0,1){5}}
\put(57,-10){$\frac{\pi}{6}$}
\put(87,-10){$\frac{\pi}{4}$}
\put(117,-10){$\frac{\pi}{3}$}
\put(176,-10){$\frac{\pi}{2}$}

\put(140,75){$y=\Lambda (x)$} 

\put(-5,-10){$O$}
\put(205,-4){$x$}
\put(-2,125){$y$}
\end{picture}
\end{center}
\caption{The graph of $y=\Lambda (x)$ for $\frac{\pi}{6} \leq x\leq \frac{\pi}{2}$}
\end{figure} 
\clearpage
\begin{flushleft}By theorem \ref{thm-5}, we obtain \end{flushleft}
\begin{eqnarray*}
{\rm vol}(P_1)&=& \Lambda \left( \frac{\pi}{3}\right) +\Lambda \left( \frac{\pi}{6}\right) ,\\
{\rm vol}(P_2)&=& 2\Lambda \left( \frac{\pi}{4}\right) ,\\
{\rm vol}(P_3)&=& 3\Lambda \left( \frac{\pi}{3}\right) .
\end{eqnarray*}
Since $\frac{\rm d}{{\rm d}x}\Lambda (x)$ and $\frac{\rm d^2}{{\rm d}x^2}\Lambda (x)$ 
are negative for $x\in \left( \frac{\pi}{6},\frac{\pi}{2}\right)$, 
$\Lambda $ is a strictly concave function. 
Thus we get 
\begin{eqnarray*}
\frac{2\Lambda \left( \frac{\pi}{4}\right) +\Lambda \left( \frac{\pi}{2}\right) }{3}
&<&\Lambda \left( \frac{\pi}{3}\right) ,\\
\frac{\Lambda \left( \frac{\pi}{3}\right) +\Lambda \left( \frac{\pi}{6}\right) }{2}
&<&\Lambda \left( \frac{\pi}{2}\right) .
\end{eqnarray*}
Thus ${\rm vol}(P_1)<{\rm vol}(P_2)<{\rm vol}(P_3)$. 
Note that $\tau (P_1)<\tau (P_2)<\tau (P_3)$. 

Now we consider the other ideal Coxeter polyhedra. 
A polyhedron that has a base and at least three triangular faces 
that meet at a point above the base is called a pyramid. 
Since any cusp of a three-dimensional ideal Coxeter polyhedron  
is shared by three or four faces, a three-dimensional ideal Coxeter pyramid must be either a tetrahedron 
or a square pyramid. By Lemma \ref{lem-1}, there exist only two ideal Coxeter square pyramids. 
One is a Coxeter pyramid whose square face has two opposite $\frac{\pi}{3}$-edges and two opposite 
$\frac{\pi}{6}$-edges. 
The other is a Coxeter pyramid whose square face has four $\frac{\pi}{4}$-edges. 
Their combinatorial structures are depicted in Fig. 3. 
 
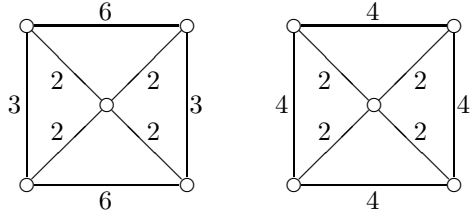
\begin{figure}[htp]
\begin{center}
\begin{picture}(160, 80)
\put(0,0){\circle{5}} 
\put(0,60){\circle{5}}
\put(60,0){\circle{5}}
\put(60,60){\circle{5}}
\put(30,30){\circle{5}}
\put(3,0){\line(1,0){54}}
\put(0,3){\line(0,1){54}}
\put(3,60){\line(1,0){54}}
\put(60,3){\line(0,1){54}}

\put(2,2){\line(1,1){26}}
\put(2,58){\line(1,-1){26}}
\put(58,58){\line(-1,-1){26}}
\put(58,2){\line(-1,1){26}}

\put(-7,27){$3$}
\put(27,-9){$6$}
\put(27,62){$6$}
\put(61,27){$3$}
\put(9,36){$2$}
\put(9,17){$2$}
\put(45,36){$2$}
\put(45,17){$2$}

\put(100,0){\circle{5}} 
\put(100,60){\circle{5}}
\put(160,0){\circle{5}}
\put(160,60){\circle{5}}
\put(130,30){\circle{5}}
\put(103,0){\line(1,0){54}}
\put(100,3){\line(0,1){54}}
\put(103,60){\line(1,0){54}}
\put(160,3){\line(0,1){54}}

\put(102,2){\line(1,1){26}}
\put(102,58){\line(1,-1){26}}
\put(158,58){\line(-1,-1){26}}
\put(158,2){\line(-1,1){26}}

\put(93,27){$4$}
\put(127,-9){$4$}
\put(127,62){$4$}
\put(161,27){$4$}
\put(109,36){$2$}
\put(109,17){$2$}
\put(145,36){$2$}
\put(145,17){$2$}

\end{picture}
\end{center}
\caption{Combinatorial structures of ideal Coxeter pyramids in $\mathbb{H}^3$}
\end{figure}

Denote by $P_4$ the first ideal Coxeter pyramid, and denote by $P_5$ the last one. 
Consider a hyperplane whose closure includes the cusp of $P_4$ shared 
by its four faces and the other two cusps which are opposite in the square face of $P_4$. 
Decompose $P_4$ by this hyperplane, and we obtain two simplices isometric to $P_1$. Thus 
\begin{equation*}
{\rm vol}(P_4)=2{\rm vol}(P_1)=2\Lambda \left( \frac{\pi}{3}\right) +2\Lambda \left( \frac{\pi}{6}\right) .
\end{equation*}
By decomposing $P_5$ as in the case $P_4$, we obtain two simplices isometric to $P_2$. Thus 
\begin{equation*}
{\rm vol}(P_5)=2{\rm vol}(P_2)=4\Lambda \left( \frac{\pi }{4}\right) . 
\end{equation*} 
Thus ${\rm vol}(P_4)<{\rm vol}(P_5)$. 
Since $\frac{\rm d}{{\rm d}x}\Lambda (x)<0$ for $x\in \left( \frac{\pi}{6},\frac{\pi}{2}\right)$ and 
$\Lambda \left( \frac{\pi}{2}\right) =0$, 
we obtain $\Lambda \left(\frac{\pi}{6}\right) >\Lambda \left( \frac{\pi}{3}\right)$. 
Thus ${\rm vol}(P_4)>{\rm vol}(P_3)$. 
Hence 
\begin{equation*}
{\rm vol}(P_1)<{\rm vol}(P_2)<{\rm vol}(P_3)<{\rm vol}(P_4)<{\rm vol}(P_5). 
\end{equation*}
On the other hand, the growth rate of $P_4$ is $\tau (P_4)\approx 2.747380$ and that of $P_5$ is 
$\tau (P_5)\approx 2.845466$. 
Thus 
\begin{equation*}
\tau (P_1)<\tau (P_2)<\tau (P_3)<\tau (P_4)<\tau (P_5). 
\end{equation*}
Moreover, the following is true. 
\begin{thm}\label{thm-6} 
Let $P$ and $P'$ be ideal Coxeter polyhedra in $\mathbb{H}^3$ 
and $\tau (P)$ $($resp. $\tau (P'))$ the growth rate of $P$ $($resp. $P')$. 
Suppose that $P$ has a face $F$ which is isometric to a face $F'$ of $P'$. 
By gluing $F$ and $F'$, we obtain the polyhedron. 
Denote this polyhedron by $P\ast _FP'$. 
If $P\ast _FP'$ is also an ideal Coxeter polyhedron in $\mathbb{H}^3$, 
then its growth rate $\tau (P\ast _FP')$ is greater than both $\tau (P)$ and $\tau (P')$. 
\end{thm}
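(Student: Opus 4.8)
The plan is to compare the growth rate of $\tilde P:=P\ast_F P'$ with those of $P$ and of $P'$ through the polynomial $g_{\langle\cdot\rangle}$. By Proposition \ref{prop-2}, for every ideal Coxeter polyhedron $Q$ in $\mathbb{H}^3$ one has $\tau(Q)=1/r_0(Q)$, where $r_0(Q)\in\left(0,\frac{1}{2}\right)$ is the unique zero of $g_{\langle Q\rangle}$ of smallest modulus and $\left.\frac{\rm d}{\rm dt}g_{\langle Q\rangle}(t)\right|_{t=r_0(Q)}>0$. Hence it suffices to prove that
\begin{equation*}
g_{\langle\tilde P\rangle}(t)>g_{\langle P\rangle}(t)\quad\text{and}\quad g_{\langle\tilde P\rangle}(t)>g_{\langle P'\rangle}(t)\qquad\text{for all }t\in\left(0,\frac{1}{2}\right).
\end{equation*}
Indeed, evaluating the first inequality at $t=r_0(P)$ gives $g_{\langle\tilde P\rangle}(r_0(P))>0$; since $g_{\langle\tilde P\rangle}(0)=-2<0$, there is then a zero of $g_{\langle\tilde P\rangle}$ in $\left(0,r_0(P)\right)\subset\left(0,\frac{1}{2}\right)$, which by Proposition \ref{prop-2} must equal $r_0(\tilde P)$; hence $r_0(\tilde P)<r_0(P)$, that is, $\tau(\tilde P)>\tau(P)$, and symmetrically $\tau(\tilde P)>\tau(P')$.

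First I would record the combinatorics of the gluing. If $F$ (and hence $F'$) has $k$ cusps, then $c(\tilde P)=c(P)+c(P')-k$ and $f(\tilde P)=f(P)+f(P')-2-\mu$, where $\mu$ is the number of edges of $F$ along which the two adjacent outer faces become coplanar and fuse into a single face of $\tilde P$; such a fusion forces that edge to be a $\frac{\pi}{2}$-edge both in $P$ and in $P'$. Writing $\Delta c,\Delta f,\Delta c_9,\Delta c_{10}$ for the differences between the invariants of $\tilde P$ and those of $P$, a substitution in the formula for $g_{\langle\cdot\rangle}$ of Section 3 gives
\begin{eqnarray*}
g_{\langle\tilde P\rangle}(t)-g_{\langle P\rangle}(t)&=&2t^2(1+t^2)(1+t^2+t^3)\,\Delta c+2t(1-t)(1+t^2)^2\,\Delta f\\
&\ &+\,t^2(1-t)(1+t^2)\,\Delta c_9+t^3(1-t)\,\Delta c_{10},
\end{eqnarray*}
and each of these four coefficient polynomials is positive on $\left(0,\frac{1}{2}\right)$.

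The heart of the matter is to show that the combinatorics forces this expression to be positive on $\left(0,\frac{1}{2}\right)$. Since no face of $P'$ other than $F'$ can contain every cusp of $P'$ (a polygon whose vertices lie in a plane lies in that plane), we get $\Delta c=c(P')-k\ge 1$; and since the $k$ edges of $F'$ have $k$ distinct neighbouring faces, $f(P')\ge k+1$. Applying Lemma \ref{lem-1} to the cusps of $\tilde P$ one obtains: $(a)$ no cusp of $F$ or of $F'$ is of type $(ii)$ — the two faces of such a cusp other than $F$ meet along a $\frac{\pi}{3}$-edge, and since no fusion can occur at that cusp this edge persists at the corresponding cusp of $\tilde P$, which then has four faces and so must be of type $(i)$ with all dihedral angles $\frac{\pi}{2}$, a contradiction; hence $\Delta c_9\ge 0$. $(b)$ If $\mu$ were as large as $f(P')-1$, every face of $P'$ other than $F'$ would meet $F'$ along a $\frac{\pi}{2}$-edge, which would force $k=f(P')-1$ with all $k$ edges of $F'$ being $\frac{\pi}{2}$-edges, hence all $k$ cusps of $F'$ of type $(i)$ and $c_8(P')\ge k$; but identity $(9)$ reads $c_8(P')=2f(P')-c(P')-4$, which together with $c(P')\ge k=f(P')-1$ is impossible. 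Hence $\mu\le f(P')-2$ and $\Delta f\ge 0$. $(c)$ The only one of $\Delta c,\Delta f,\Delta c_9,\Delta c_{10}$ that may be negative is $\Delta c_{10}$, and a type-$(iii)$ cusp of $F$ is destroyed only when the cusp of $F'$ matched to it is itself of type $(iii)$ or when a fusion occurs there, so the negative part of $\Delta c_{10}$ is controlled by $k$ and $\mu$; combining this with $\Delta c\ge 1$, $\Delta f=f(P')-2-\mu$ and $f(P')\ge k+1$, and using that on $\left(0,\frac{1}{2}\right)$ the factor $t^3(1-t)$ attached to $\Delta c_{10}$ is dominated by the factors attached to $\Delta f$ and to $\Delta c$, one checks that the displayed polynomial stays positive on $\left(0,\frac{1}{2}\right)$. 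Running the same argument with $P$ and $P'$ interchanged gives $g_{\langle\tilde P\rangle}(t)>g_{\langle P'\rangle}(t)$ on $\left(0,\frac{1}{2}\right)$, which finishes the proof.

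The main obstacle is step $(c)$: making the final polynomial inequality robust requires a careful case analysis of how each cusp and each edge of $F$ transforms under the gluing, since this depends on which outer faces fuse; one must verify that every type-$(iii)$ cusp that is lost is paid for either by a newly created type-$(iii)$ cusp or by the forced increase of $f(P')$ coming from $F'$ being a $k$-gon. This enumeration is of the same flavour as the proof of Lemma \ref{lem-2} and the analysis of ideal simplices in Section 3, and I would carry it out by listing, via Lemma \ref{lem-1} applied to $\tilde P$, the admissible local configurations at each cusp of $F\cap F'$.
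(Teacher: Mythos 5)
Your overall strategy is exactly the paper's: reduce the statement to the inequality $g_{\langle P\ast_FP'\rangle}(t)>g_{\langle P\rangle}(t)$ (and symmetrically for $P'$) on $\left(0,\tfrac{1}{2}\right)$ and conclude via Proposition \ref{prop-2}, and your linear-in-differences formula for $g_{\langle \tilde P\rangle}-g_{\langle P\rangle}$ is the correct one (its four coefficient polynomials are the partial derivatives of $g$ with respect to $c,f,c_9,c_{10}$, all positive on $\left(0,\tfrac{1}{2}\right)$). Your preliminary observations are also sound: $\Delta c=c(P')-c(F)\geq 1$; the Euler-type argument ruling out $\mu=f(P')-1$, hence $\Delta f\geq 0$; and the fact that no cusp of $F$ can be of type $(ii)$, hence $\Delta c_9\geq 0$ (a nice shortcut the paper does not state in this form).

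However, there is a genuine gap at precisely the point where the real work lies: you never actually control the one possibly negative term, $t^3(1-t)\,\Delta c_{10}$. Saying that its factor ``is dominated by'' the factors attached to $\Delta c$ and $\Delta f$ is not an argument, because $|\Delta c_{10}|$ is not bounded by $\Delta c$ or $\Delta f$ a priori: one has $\Delta c_{10}=c_{10}(P')-2c_{4,4}(F)-c_{2,4}(F)$, which can be arbitrarily negative in absolute terms unless one proves quantitative relations tying it to the positive terms. This requires the edge-matching rules forced by the Coxeter condition (a $\tfrac{\pi}{4}$-edge of $F$ must meet a $\tfrac{\pi}{4}$-edge of $F'$, a $\tfrac{\pi}{2}$-edge must meet a $\tfrac{\pi}{2}$-edge and fuse, so $\mu=e_2(F)$, and $\tfrac{\pi}{3}$ must meet $\tfrac{\pi}{6}$), the bookkeeping identities for $c$, $c_9$, $c_{10}$, $f$ of $P\ast_FP'$, the relation $2e_4(F)=2c_{4,4}(F)+c_{2,4}(F)$, and then an explicit polynomial estimate; the paper moreover must split off the borderline case $f(P')=e_2(F)+e_4(F)+1$, where an Andreev-type argument shows $e_4(F)\geq 2$ and $c_{10}(P')\geq 3$ before positivity can be checked. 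Your proposal explicitly defers all of this (``one checks \ldots''), so while the skeleton matches the paper, the decisive estimate --- the heart of the proof --- is missing; as written, the argument does not establish $g_{\langle \tilde P\rangle}(t)>g_{\langle P\rangle}(t)$ on $\left(0,\tfrac{1}{2}\right)$ when $F$ carries $\tfrac{\pi}{4}$-edges.
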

\begin{proof}
Denote by $c_{m,n}(F)$ the number of cusps which are shared 
by both $\frac{\pi}{m}$-edge and $\frac{\pi}{n}$-edge $F$ has. 
If $F$ has a $\frac{\pi }{4}$-egde, this edge must coincide with an $\frac{\pi }{4}$-edge of $F'$ since 
$P\ast_ FP'$ is a Coxeter polyhedron. 
Note that the dihedral angle of $P\ast _FP'$ at this edge is $\frac{\pi }{2}$. 
Thus, that any cusp counted in $c_{4,4}(F)$ coincides to a cusp of $P'$ which is counted in $c_{4,4}(F')$. 
In addition, the type of this cusp in $P\ast _FP'$ is $(i)$, i.e. this cusp is counted in $c_8(P\ast _FP')$. 

Let $F_1$ be a $P$'s face which is adjacent to $F$ at $\frac{\pi }{2}$-edge, and 
$F_1'$ be a $P'$'s face which is adjacent to $F'$ at $\frac{\pi }{2}$-edge. 
If these two $\frac{\pi }{2}$-edges are identified by gluing, 
then these $\frac{\pi }{2}$-edges vanish in $P\ast _FP'$. 
In this case, the hyperplane containing $F_1$ is the same as that containing $F_1'$. 
From the above, we can realize two things. 
One is that any cusp counted in $c_{2,2}(F)$ coincides with both a cusp counted in $c_{2,2}(F')$ 
and a cusp counted in $c_8(P\ast _FP')$ since $P\ast _FP'$ is also Coxeter polyhedron. 
The other is that any cusp counted in $c_{2,4}(F)$ must coincide with a cusp counted in both $c_{2,4}(F')$ and  
$c_{10}(P\ast _FP')$. 

The dihedral angle of $P\ast _FP'$ arisen from a $\frac{\pi }{6}$-edge of $F$ and 
a $\frac{\pi }{6}$-edge of $F'$ is $\frac{\pi }{3}$. 
Thus, by gluing $F$ and $F'$, 
any cusp counted in $c_{2,6}(F)$ coincides with a cusp counted in $c_{2,6}(F')$, 
and it is also counted in $c_9(P\ast _FP')$. 
Since $\frac{\pi}{6}+\frac{\pi}{3}=\frac{\pi}{2}$, 
any cusp counted in $c_{3,6}(F)$ coincides with a cusp counted in $c_{6,3}(F')$ and in $c_8(P\ast _FP')$. 

By the above observation on the number of each kind of cusps, we obtain 
\begin{eqnarray}
c(P\ast _FP')&=&c(P)+c(P')-c(F), \\
c_9(P\ast _FP')&=&c_9(P)+c_9(P')+c_{2,6}(F), \\
c_{10}(P\ast _FP')&=&c_{10}(P)+c_{10}(P')-2c_{4,4}(F)-c_{2,4}(F). 
\end{eqnarray}
We denote by $e_k(F)$ the number of $\frac{\pi}{k}$-edges $F$ has. 
Since any edge of $\frac{\pi }{2}$-edge which $F$ has vanishes in $P\ast _FP'$, we get 
\begin{equation}
f(P\ast _FP')=f(P)+f(P')-e_2(F)-2. 
\end{equation}
Then, by $(15$-$18)$, 
\begin{eqnarray*}
& &g_{\langle P\ast _FP'\rangle }(t)-g_{\langle P\rangle }(t) \\
&=&(2t^2+4t^3+4t^4+2t^5+2t^6+2t^7)(c(P\ast _FP')-c(P)) \\
& &+(2t-2t^2+4t^3-4t^4+2t^5-2t^6)(f(P\ast _FP')-f(P)) \\
& &+(t^2-t^3+t^4-t^5)(c_9(P\ast _FP')-c_9(P)) \\
& &+(t^3-t^4)(c_{10}(P\ast _FP')-c_{10}(P)) \\
&=&(2t^2+4t^3+4t^4+2t^5+2t^6+2t^7)(c(P')-c(F)) \\
& &+(2t-2t^2+4t^3-4t^4+2t^5-2t^6)(f(P')-e_2(F)-e_4(F)+e_4(F)-2) \\
& &+(t^2-t^3+t^4-t^5)(c_9(P')+c_{2,6}(F)) \\
& &+(t^3-t^4)(c_{10}(P')-2c_{4,4}(F)-c_{2,4}(F)) \\
&=&(2t^2+4t^3+4t^4+2t^5+2t^6+2t^7)(c(P')-c(F)) \\
& &+(2t-2t^2+4t^3-4t^4+2t^5-2t^6)(f(P')-e_2(F)-e_4(F)-2) \\
& &+(t^2-t^3+t^4-t^5)(c_9(P')+c_{2,6}(F)) \\
& &+(t^3-t^4)(c_{10}(P')-c_{4,4}(F)-c_{2,4}(F)) \\
& &+(2t-2t^2+4t^3-4t^4+2t^5-2t^6)e_4(F)-(t^3-t^4)c_{4,4}(F). 
\end{eqnarray*}
It is clear that, for $0<t<\frac{1}{2}$, 
$(2t^2+4t^3+4t^4+2t^5+2t^6+2t^7)(c(P')-c(F))$ is positive, and 
$(2t-2t^2-4t^4+2t^5-2t^6)$, $(t^2-t^3+t^4-t^5)(c_9(P')+c_{2,6}(F))$ and 
$(t^3-t^4)(c_{10}(P')-c_{4,4}(F)-c_{2,4}(F))$ are greater than or equal to $0$. 
By the definitions of $c_{4,4}(F)$ and $c_{2,4}(F)$, we obtain an identity; 
\begin{equation*}
2e_4(F)=2c_{4,4}(F)+c_{2,4}(F). 
\end{equation*}
Since $2t-2t^2-4t^4+2t^5-2t^6$ and $t^3-t^4$ are also positive for $0<t<\frac{1}{2}$, 
by using the above identity, 
we get  
\begin{eqnarray*}
& &(2t-2t^2+4t^3-4t^4+2t^5-2t^6)e_4(F)-(t^3-t^4)c_{4,4}(F) \\
&=&(2t-2t^2+4t^3-4t^4+2t^5-2t^6)(c_{4,4}(F)+\frac{1}{2}c_{2,4}(F))-(t^3-t^4)c_{4,4}(F) \\
&\geq &t(2-2t-t^2+3t^3-3t^3+2t^4-2t^5)c_{4,4}(F) \\
&\geq &0
\end{eqnarray*}
for $0<t<\frac{1}{2}$. 
From the above, if $f(P')-e_2(F)-e_4(F)-2\geq 0$, then 
$g_{\langle P\ast _FP'\rangle }(t)>g_{\langle P\rangle }(t)$ for $0<t<\frac{1}{2}$. 

From now on, we consider the case $f(P')-e_2(F)-e_4(F)-2<0$. 
Since the number of faces which are adjacent to $F'$ is at least $e_2(F)+e_4(F)$, 
we obtain $f(P')\geq e_2(F)+e_4(F)+1$. 
Thus, we have only to consider the case $f(P')=e_2(F)+e_4(F)+1$. 
That is to say, $P'$ has only faces which are adjacent to $F'$ at 
either a $\frac{\pi}{2}$-edge or a $\frac{\pi}{4}$-edge other than $F'$. 
If $F'$ has a cusp shared by four $\frac{\pi}{2}$-edge, then there is a face which is parallel to $F'$. 
Thus, $F'$ does not have such a cusp. 
Then any cusp of $F$ is shared exactly three edges: two $\frac{\pi}{4}$-edges and one $\frac{\pi}{2}$-edge. 
Therefore, $e_4(F)\geq 2$ and $c_{10}(P')\geq 3$. 
By these two inequalities, we obtain the following lower bound for $0<t<\frac{1}{2}$.  
\begin{eqnarray*}
g_{\langle P\ast _FP'\rangle }(t)-g_{\langle P\rangle }(t)
&=&(2t^2+4t^3+4t^4+2t^5+2t^6+2t^7)(c(P')-c(F)) \\
& &+(2t-2t^2+4t^3-4t^4+2t^5-2t^6)(f(P')-e_2(F)-2) \\
& &+(t^2-t^3+t^4-t^5)(c_9(P')+c_{2,6}(F)) \\
& &+(t^3-t^4)(c_{10}(P')-2c_{4,4}(F)-c_{2,4}(F)) \\
&>&(2t-2t^2+4t^3-4t^4+2t^5-2t^6)(e_4(F)-1) \\
& &+(t^3-t^4)(c_{10}(P')-2e_4(F)) \\
&\geq &(2t-2t^2+4t^3-4t^4+2t^5-2t^6)(e_4(F)-1) \\
& &+(t^3-t^4)(3-2e_4(F)) \\
&\geq &(2t-2t^2+2t^3-2t^4+2t^5-2t^6)e_4(F) \\
& &-2t+2t^2-t^3+t^4-2t^5+2t^6 \\
&\geq &2(2t-2t^2+2t^3- 2t^4+2t^5-2t^6) \\
& &-2t+2t^2-t^3+t^4-2t^5+2t^6 \\
&=&2t-2t^2+3t^3-3t^4+2t^5-2t^6 \\
&>&0. 
\end{eqnarray*}

Thus, in each case, $g_{\langle P\ast _FP'\rangle }(t)>g_{\langle P\rangle }(t)$ for $0<t<\frac{1}{2}$. 
Hence, by Proposition \ref{prop-2}, we obtain 
\begin{equation*}
\tau (P\ast _FP')>\max \{ \tau (P),\ \tau (P')\} .
\end{equation*}
\end{proof}

\subsection*{Acknowledgment}
The author would like to show his greatest appreciation to Professor Hiroyasu Izeki 
who provided valuable comments and suggestions. 



\vspace{1cm}
\begin{flushleft}
Waseda University Senior High School, \\
3-31-1 Kamishakujii, Nerima-ku, Tokyo, 177-0044, Japan. \\
email: \verb|jun-b-nonaka@waseda.jp|
\end{flushleft}
\end{document}